 \newtheorem{thm}{Theorem}[section]
 \newtheorem{lem}[thm]{Lemma}
 \newtheorem{prop}[thm]{Proposition}
 \theoremstyle{definition}
 \newtheorem{defn}[thm]{Definition}
 \theoremstyle{remark}
 \newtheorem{rem}[thm]{Remark}
 \numberwithin{equation}{section}
\newcommand{\eps}{\varepsilon}
\newcommand{\norm}[1]{\big\|#1\big\|}
\newcommand{\abs}[1]{\left|#1\right|}
\newcommand{\sabs}[1]{|#1|}
\newcommand{\set}[1]{\left\{#1\right\}}
\newcommand{\inner}[1]{\left(#1\right)}
\newcommand{\biginner}[1]{\Bigl(#1\Bigr)}
\newcommand{\comii}[1]{\left<#1\right>}
\newcommand{\reff}[1]{(\ref{#1})}
\newcommand{\Sal}{\mathscr{S}}
\newcommand{\lpq}{L_{p,q;\lambda}}
\newcommand{\mpq}{M_{p,q;\lambda}}
\newcommand{\R}{\mathbb R}
\begin{document}

\title[Global  Gevrey hypoellipticity for the twisted Laplacian]{Global  Gevrey hypoellipticity for the twisted Laplacian on forms}

\author[W.-X. Li]{Wei-Xi Li}

\address{School of Mathematics and Statistics,  and Computational Science Hubei Key Laboratory, Wuhan University,
Wuhan 430072, China }
\email{wei-xi.li@whu.edu.cn}
\thanks{W.-X. Li  was supported by NSF of China( No. 11422106) and Fok Ying Tung Education Foundation (No. 151001).}

\author[ A.Parmeggiani]{Alberto Parmeggiani}

\address{Department of Mathematics, University of Bologna, Piazza di Porta San Donato 5, 40126 Bologna, Italy}

\email{alberto.parmeggiani@unibo.it}

\author[Y.-L. Wang]{Yan-Lin Wang}

 \address{School of Mathematics and Statistics, Wuhan University,
Wuhan 430072, China }
\email{wangyanlin@whu.edu.cn}

\subjclass{Primary 35H10; Secondary 35B65}

\keywords{Global Gevrey hypoellipticity, Twisted Laplacian, Anisotropic Gevrey regularity}
\date{}

\begin{abstract}
We study in this paper the global hypoellipticity property in the Gevrey category for the  generalized twisted Laplacian   on   forms.  
Different from the 0-form case, where the twisted Laplacian is a scalar operator, 
this is a system of differential operators when acting on forms, each component operator being elliptic locally and degenerate  globally. 
We obtain here the global hypoellipticity in anisotropic Gevrey space.  

\end{abstract}

\maketitle

\section{Introduction and main results}

The twisted Laplacian
\begin{eqnarray*}
\mathcal L =\sum_{j=1}^n \inner{D_{x_j}-
 y_j/2}^2+\sum_{j=1}^n\inner{D_{y_j}+
 x_j/2}^2
\end{eqnarray*}
(where $D=-i\partial$) is a  magnetic Schr\"odinger operator on $\mathbb R^{2n}\simeq \mathbb C^n,$ corresponding to the quantum-mechanical Hamiltonian of the motion of $n$ particles in the plane under the influence of a constant magnetic field perpendicular to the plane
itself.
The importance of the twisted Laplacian $\mathcal L$ is well-known, in that not only is  it a model of Schr\"odinger operator with a constant magnetic field, but it also describes 
the action of the reduced Heisenberg group. It is closely connected to quantum harmonic oscillators in $\mathbb R^{2n}$ and to 
the Kohn sub-Laplacian on the Heisenberg group.  Because of this, 
it has recently received a good deal of interest,  and up to now there have been extensive works on it; see for instance \cite{Thangavelu, Thangavelu1, Wong, Wong1}  and references listed therein.

The   twisted Laplacian $\mathcal L$ is well-known to be elliptic, but not globally elliptic, since its symbol is given by
\begin{eqnarray*}
	\sigma(\mathcal L)(x,y,\xi,\eta)=\sum_{j=1}^n \inner{\xi_j-
 y_j/2}^2+\sum_{j=1}^n\inner{\eta_j+
 x_j/2}^2,
 \end{eqnarray*}
 so that one cannot find a constant $C$ such that
 \begin{eqnarray*}
 	\abs{\sigma(\mathcal L)(x,y,\xi,\eta)}\geq C \inner{1+\xi^2+\eta^2+x^2+y^2}
 \end{eqnarray*}
for   $\xi^2+\eta^2+x^2+y^2$ large enough.  Hence while its local regularity properties are well understood, 
the corresponding global properties are not clear.    
There has been considerable work concerned with the spectral and global properties, for example the hypoellipticity in the 
Schwartz space or Gelfand-Shilov spaces 
\cite{DasguptaWong, GT},  the spectrum and fundamental solutions \cite{BBR,C,DasguptaWong, KochRicci, Tie},  where the key point is 
to  construct the heat kernel 
and Green's function of the twisted Laplacian $\mathcal L$ using the Weyl and the Fourier-Wigner transforms of Hermite functions.
The  global hypoellipticity in the Gevrey and analytic category for the anisotropic twisted Laplacian
\begin{eqnarray*}
 L_{p,q} =\sum_{j=1}^n \inner{D_{x_j}-
 y_j^{p_j}/2}^2+\sum_{j=1}^n\inner{D_{y_j}+
 x_j^{q_j}/2}^2=:-\sum_{j=1}^n(X_j^2+Y_j^2),
\end{eqnarray*}
where 
$$X_j=\frac{\partial}{\partial x_j}-\frac{i}{2}y_j^{p_j},\,\,\,\,Y_j=\frac{\partial}{\partial y_j}+\frac{i}{2}x_j^{q_j},$$
with $p_j, q_j\geq 1$, was obtained in \cite{WXL}.

In this paper we will investigate the global hypoellipticity  in the Gevrey class for the (anisotropic) twisted Laplacian acting on forms 
rather than functions, and in this case 
we will have a system of linear partial differential operator.  Precisely,
we consider the magnetic vector potential $A(x,y)$, represented as a $1$-form
$$\omega_A=\frac12\sum_{j=1}^n\Bigl(y_j^{p_j}dx_j-x_j^{q_j}dy_j\Bigr),$$
which gives rise to the magnetic field $B(x,y)$, represented as a $2$-form by
$$\sigma_B=d\omega_A=-\frac12\sum_{j=1}^n\Bigl(p_jy_j^{p_j-1}+q_jx_j^{q_j-1}\Bigr)dx_j\wedge dy_j=:-\sum_{j=1}^nM^{(j)}_{p,q}(x,y)dx_j\wedge dy_j.$$
Put
$$X_A:=\frac12\sum_{j=1}^n\Bigl(y_j^{p_j}\frac{\partial}{\partial x_j}-x_j^{q_j}\frac{\partial}{\partial y_j}\Bigr).$$
For $0\leq \ell\leq 2n$ a given integer, we denote from now on by $\boldsymbol{\wedge}^\ell L^2$, 
$\boldsymbol{\wedge}^\ell\mathscr{S}'$,  etc. the $\ell$ forms with coefficients in $L^2,$ $\mathscr{S}'$, etc., respectively.
Here we consider the operators on $\ell$-forms
$$D_A:=d-i\omega_A\wedge\cdot\colon\boldsymbol{\wedge}^\ell\mathscr{S}'\longrightarrow\boldsymbol{\wedge}^{\ell+1}\mathscr{S}',$$ 
with formal adjoint given by
$$D_A^*=d^*+i\,\boldsymbol{i}_{X_A}\colon\boldsymbol{\wedge}^{\ell+1}\mathscr{S}'\longrightarrow\boldsymbol{\wedge}^\ell\mathscr{S}',$$ 
where $\boldsymbol{i}_{X_A}$ denotes contraction by the vector field $X_A$. 
Consider the sequences of operators
$$\ldots\xrightarrow{D_A}\boldsymbol{\wedge}^\ell\mathscr{S}'\xrightarrow{D_A}\boldsymbol{\wedge}^{\ell+1}\mathscr{S}'\xrightarrow{D_A}\boldsymbol{\wedge}^{\ell+2}\mathscr{S}'\xrightarrow{D_A}\ldots,$$
$$\ldots\xrightarrow{D_A^*}\boldsymbol{\wedge}^{\ell+2}\mathscr{S}'\xrightarrow{D_A^*}\boldsymbol{\wedge}^{\ell+1}\mathscr{S}'\xrightarrow{D_A^*}\boldsymbol{\wedge}^\ell\mathscr{S}'\xrightarrow{D_A^*}\ldots$$
Note that they do not form a differential complex since $D_A^2\not=0$, the magnetic form being non-zero and hence a curvature term. 
We may nevertheless define  the  anisotropic twisted Laplacian of the complex as follows
\begin{equation}\label{twis}
\mathbb{L}_{p,q}=D_AD_A^*+D_A^*D_A\colon \boldsymbol{\wedge}^\ell\mathscr{S}'\longrightarrow \boldsymbol{\wedge}^\ell\mathscr{S}',	
\end{equation}
which is a system of partial differential operators.  In particular, when  $\mathbb{L}_{p,q}$ acts on functions  one can easily see that
\begin{eqnarray*}
	\mathbb{L}_{p,q}=  L_{p,q}=\sum_{j=1}^n \inner{D_{x_j}-
 y_j^{p_j}/2}^2+\sum_{j=1}^n\inner{D_{y_j}+
 x_j^{q_j}/2}^2.
\end{eqnarray*}

\begin{defn}
Let $\sigma=(\sigma_1,\cdots, \sigma_n)$ and  $\tau=(\tau_1,\cdots, \tau_n)$ with $\sigma_j, \tau_j\geq 1$ for each $j.$  We denote by   $G^{\sigma, \tau}$ the anisotropic Gevery space, 
which consists of all the smooth functions $u\in C^\infty(\mathbb R^{2n})$ for which there exists a constant $C$ such that for all 
multi-indices  $\alpha=(\alpha_1,\cdots,\alpha_n),$ $\beta=(\beta_1,\cdots\beta_n)\in\mathbb{Z}_+$ such that
\[
   \norm{\partial^{\alpha}_x\partial^\beta_y u}_{L^2} \leq C^{1+\abs\alpha+\abs\beta}(\alpha!)^\sigma(\beta!)^\tau.
\] 
By $\boldsymbol{\wedge}^\ell G^{\sigma,\tau}$ we shall denote the $\ell$-forms with values in $G^{\sigma,\tau}$.
\end{defn}

Note that   $G^{1,1,\cdots,1}$
is just the space of global analytic  functions in the whole space $\R^{2n}$. We refer to \cite{Rod} for more details on Gevrey spaces.

\begin{defn}
We say that, given an integer $1\leq \ell\leq 2n,$ a partial differential operator $P: \boldsymbol{\wedge}^\ell\mathscr{S}'(\mathbb{R}^{2n})
\longrightarrow \boldsymbol{\wedge}^\ell\mathscr{S}'(\mathbb{R}^{2n})$ is  globally $\boldsymbol{\wedge}^\ell G^{\sigma,\tau}$-hypoelliptic in $\mathbb R^{2n}$, 
if $f\in \boldsymbol{\wedge}^\ell L^2(\R^{2n})$ and $Pf\in \boldsymbol{\wedge}^\ell G^{\sigma,\tau}(\R^{2n})$  implies 
$f\in \boldsymbol{\wedge}^\ell G^{\sigma,\tau}(\R^{2n})$.
\end{defn}

Our main result can be stated as follows.

\begin{thm}\label{thG}
For all integers $\ell$ with $0\leq \ell\leq 2n$,
the twisted Laplacian $\mathbb{L}_{p,q}\boldsymbol{\wedge}^\ell\mathscr{S}'\longrightarrow\boldsymbol{\wedge}^\ell\mathscr{S}'$ 
is globally $\boldsymbol{\wedge}^\ell G^{\sigma,\tau}$-hypoelliptic in $\mathbb R^{2n}$,  where $(\sigma_j,\tau_j), j=1,2,\cdots,n$ are defined
as follows:
\begin{eqnarray*}
  \left\{
\begin{aligned}
& (\sigma_j,~\tau_j)=\big((p_j+1)/2, ~(q_j+1)/2\big),\quad    p_j=1,~{\it
   or}~q_j=1;\\[4pt]
&\sigma_j=\tau_j=\max\Big\{(p_j+1)/2, ~(q_j+1)/2\Big\},  ~~\,\,p_j,
q_j\geq 2, ~  p_j~{\it and}~q_j~{\it are~ odd};\\[4pt]
&\sigma_j=\tau_j=\max\Big\{(2p_j+2)/3, ~(2q_j+2)/3\Big\},\quad  {\it otherwise}.
\end{aligned}
\right.
\end{eqnarray*}
\end{thm}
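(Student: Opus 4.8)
The plan is to diagonalize $\mathbb{L}_{p,q}$ by means of the complex structure, reducing Theorem \ref{thG} to a family of \emph{scalar} operators that are lower-order perturbations of $L_{p,q}$, and then to re-run the scalar argument of \cite{WXL}.

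\emph{Step 1 (algebraic reduction).} Using Cartan's formula $d\,\boldsymbol{i}_{X_A}+\boldsymbol{i}_{X_A}d=\mathcal{L}_{X_A}$, its formal adjoint, and the identity $\boldsymbol{i}_{X_A}(\omega_A\wedge\,\cdot\,)+\omega_A\wedge\boldsymbol{i}_{X_A}=\omega_A(X_A)\cdot\mathrm{Id}$, a direct computation yields
\[
\mathbb{L}_{p,q}=\Delta\cdot\mathrm{Id}+2i\nabla_{X_A}+\Bigl(\frac14\sum_{j=1}^n\bigl(y_j^{2p_j}+x_j^{2q_j}\bigr)\Bigr)\mathrm{Id}+\mathcal{C}_{p,q},
\]
where $\Delta=dd^*+d^*d$ acts on coefficients as $-\sum_j(\partial_{x_j}^2+\partial_{y_j}^2)$, $\nabla_{X_A}$ differentiates coefficients along $X_A$, and $\mathcal{C}_{p,q}=\mathcal{C}_{p,q}(x,y)$ is the zeroth-order endomorphism of $\boldsymbol{\wedge}^\ell$ determined by the magnetic $2$-form $\sigma_B$. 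Since $\sigma_B=-\sum_jM^{(j)}_{p,q}(x,y)\,dx_j\wedge dy_j=-\frac{i}{2}\sum_jM^{(j)}_{p,q}(x,y)\,dz_j\wedge d\bar z_j$ is of type $(1,1)$ for the standard complex structure $z_j=x_j+iy_j$ on $\R^{2n}\simeq\mathbb C^n$, the endomorphism $\mathcal{C}_{p,q}$ is \emph{diagonal} in the constant-coefficient frame of $\ell$-forms built from the $dz_j$'s and $d\bar z_j$'s, with diagonal entries real-linear combinations $\sum_{j=1}^n\epsilon_jM^{(j)}_{p,q}(x,y)$, $\epsilon_j\in\{-1,0,1\}$; the other three summands are scalar, so $\mathbb{L}_{p,q}$ itself is diagonal in this frame. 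After a fixed unitary change of frame, $\mathbb{L}_{p,q}$ acting on $\boldsymbol{\wedge}^\ell\mathscr{S}'$ becomes the orthogonal direct sum, with multiplicities determined by $\ell$, of the scalar operators
\[
L^{(\epsilon)}_{p,q}:=L_{p,q}+\sum_{j=1}^n\epsilon_j\,M^{(j)}_{p,q}(x,y),\qquad \epsilon=(\epsilon_1,\dots,\epsilon_n)\in\{-1,0,1\}^n;
\]
for $\ell=0$ and $\ell=2n$ only $\epsilon=0$ occurs, giving back $L_{p,q}$. Using $[X_j,Y_j]=iM^{(j)}_{p,q}$ and the skew-adjointness of $X_j,Y_j$ one checks that $L^{(\epsilon)}_{p,q}=\sum_jZ_j^{(\epsilon)*}Z_j^{(\epsilon)}\ge0$, with $Z_j^{(\epsilon)}=X_j+i\epsilon_jY_j$ when $\epsilon_j\ne0$ and the $j$-th term read as $-X_j^2-Y_j^2$ when $\epsilon_j=0$; thus each $L^{(\epsilon)}_{p,q}$ is a nonnegative sub-Laplacian of the kind treated in \cite{WXL}.

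\emph{Step 2 (the curvature term is subordinate).} It suffices now to prove that each $L^{(\epsilon)}_{p,q}$ is globally $G^{\sigma,\tau}$-hypoelliptic with the exponents of Theorem \ref{thG}, the case $\epsilon=0$ being precisely \cite{WXL}. The crucial point is that $\sum_j\epsilon_jM^{(j)}_{p,q}$ is a polynomial of degree at most $p_j-1$ (resp.\ $q_j-1$) in $(x_j,y_j)$, hence strictly below the degrees $2p_j,2q_j$ of the intrinsic potential $\frac14\sum(y_j^{2p_j}+x_j^{2q_j})$ of $L_{p,q}$. Combining the elementary weighted bounds already contained in \cite{WXL} --- of the form $\|x_j^{q_j}v\|^2+\|y_j^{p_j}v\|^2\le C\bigl(\|L_{p,q}v\|\,\|v\|+\|v\|^2\bigr)$ --- with interpolation, one gets, for every $\delta>0$, $\|M^{(j)}_{p,q}v\|\le\delta\|L_{p,q}v\|+C_\delta\|v\|$; hence $\|L^{(\epsilon)}_{p,q}v\|$ and $\|L_{p,q}v\|$ are equivalent modulo lower-order terms, and $\sum_j\epsilon_jM^{(j)}_{p,q}$ is an infinitesimally $L_{p,q}$-bounded perturbation. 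Consequently the anisotropic maximal / subelliptic a priori estimate of \cite{WXL}, with its fractional gain of derivatives in each of the three regimes of the statement, holds verbatim with $L_{p,q}$ replaced by $L^{(\epsilon)}_{p,q}$.

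\emph{Step 3 (Gevrey iteration).} Let $u\in L^2$ satisfy $L^{(\epsilon)}_{p,q}u=f\in G^{\sigma,\tau}$. One runs the iteration of \cite{WXL}: apply $\partial_x^\alpha\partial_y^\beta$ to the equation, feed $\partial_x^\alpha\partial_y^\beta u$ into the a priori estimate of Step 2, and estimate the commutator
\[
\bigl[\partial_x^\alpha\partial_y^\beta,\,L^{(\epsilon)}_{p,q}\bigr]u=\bigl[\partial_x^\alpha\partial_y^\beta,\,L_{p,q}\bigr]u+\sum_j\epsilon_j\bigl[\partial_x^\alpha\partial_y^\beta,\,M^{(j)}_{p,q}\bigr]u.
\]
The first term is controlled exactly as in \cite{WXL} and produces the weight sequences fixing the exponents $\sigma_j,\tau_j$. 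The second term is a \emph{finite} sum (the $M^{(j)}_{p,q}$ being polynomials of degree at most $\max(p_j,q_j)-1$) of contributions in which at most $\max(p_j,q_j)-1$ derivatives fall on $M^{(j)}_{p,q}$, each absorbed by the weighted part of the a priori estimate; since these contributions carry no factorial growth in $(\alpha,\beta)$ (here one uses $\sigma_j,\tau_j\ge1$), they leave the exponents unchanged. Summing the inductive estimates against the weights of \cite{WXL} gives $u\in G^{\sigma,\tau}$; undoing the diagonalization of Step 1, the original $\ell$-form then belongs to $\boldsymbol{\wedge}^\ell G^{\sigma,\tau}$.

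\emph{Expected main obstacle.} The decoupling in Step 1 is a finite-dimensional linear-algebra fact and is not the source of difficulty. The real work is the weight bookkeeping of Step 3 in the ``bad'' third regime, where the quasi-homogeneous structure of $L_{p,q}$ is lost and the Gevrey exponent already jumps from $(p_j+1)/2$ to $(2p_j+2)/3$ for the scalar operator: one must verify that the perturbing potential $\sum_j\epsilon_jM^{(j)}_{p,q}$, whose degree lies strictly below the subprincipal threshold at which those exponents are computed, enters every weighted estimate and every commutator count as a genuinely subordinate term, so that the resulting weight sequences --- and hence the Gevrey exponents --- coincide with those of the scalar case.
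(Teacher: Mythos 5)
Your proposal takes a genuinely different route from the paper. The paper never diagonalizes: after deriving the representation
$\mathbb{L}_{p,q}\bigl|_{\bigwedge^\ell}=L_{p,q}\otimes\mathrm{Id}+\sum_jM^{(j)}_{p,q}\otimes\bigl(-dx_j\wedge\boldsymbol{i}_{\partial/\partial y_j}+dy_j\wedge\boldsymbol{i}_{\partial/\partial x_j}\bigr)$
it works with the coupled $2\times2$ system \reff{lmeqnset} throughout, carrying $(f_1,f_2)$ together through the parametric a priori estimate of Proposition \ref{pr1}, where the cross-coupling via $M_{p,q}$ generates exactly the terms $I_6,I_7,I_8$ of Lemma \ref{lemma1}; the Gevrey induction (Propositions \ref{prpkey} and \ref{prpkey+}) is then run on the pair. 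You instead observe that each endomorphism $E_j:=-dx_j\wedge\boldsymbol{i}_{\partial/\partial y_j}+dy_j\wedge\boldsymbol{i}_{\partial/\partial x_j}$ is a degree-zero derivation with $E_j\,dz_k=-i\delta_{jk}dz_k$, $E_j\,d\bar z_k=i\delta_{jk}d\bar z_k$, so the $E_j$'s are simultaneously diagonal in the $\{dz_I\wedge d\bar z_J\}$ frame with eigenvalues in $\{-i,0,i\}$, and $\mathbb{L}_{p,q}$ decouples into scalar operators $L_{p,q}$ shifted by the real zeroth-order potentials $\mp\frac12\sum_j\epsilon_j\bigl(q_jx_j^{q_j-1}+p_jy_j^{p_j-1}\bigr)$. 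This algebraic reduction is correct (one minor slip: you write the perturbation as $\sum_j\epsilon_jM^{(j)}_{p,q}$ with $\epsilon_j\in\{-1,0,1\}$ although $M^{(j)}_{p,q}$ is purely imaginary; the eigenvalues $\pm i$ of $E_j$ cancel the $i$ in $M^{(j)}_{p,q}$, giving a real potential, and the sum-of-squares identity $Z_j^{(\epsilon)*}Z_j^{(\epsilon)}=-X_j^2-Y_j^2-i\epsilon_j M^{(j)}_{p,q}$ likewise produces the $-i\epsilon_j$, not $\epsilon_j$). What your approach buys is conceptual clarity --- a single scalar perturbed operator instead of a coupled vector system --- and what it trades is that the burden shifted out of the linear algebra reappears unchanged in the analysis: the real potential $\mp\frac12(q_jx_j^{q_j-1}+p_jy_j^{p_j-1})$ has indefinite sign for even $p_j$ or $q_j$, and controlling $\int|cM_{p,q}|\,|D^m u|^2$ together with the commutators $[\partial^\alpha_x\partial^\beta_y,M^{(j)}_{p,q}]$ inside the Gevrey induction is precisely what the paper's estimates for $I_6,I_7,I_8$ and the subsequent weight bookkeeping do. Your Steps 2--3 are left as a sketch --- infinitesimal $L_{p,q}$-boundedness alone does not preserve Gevrey exponents through the iteration, and you rightly flag the third regime as the place where the bookkeeping must actually be verified --- so to make this a proof one would still have to carry out an induction of the same size as Propositions \ref{prpkey} and \ref{prpkey+}, just in scalar rather than $2\times2$ form. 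Note also that you do not address the rescaling step (the paper's Proposition \ref{prpequ}) by which the $\lambda$-parametric estimates are transferred back to $\mathbb{L}_{p,q}$ itself; without some version of that argument (or an $\lambda$-free a priori estimate replacing it) the plan is incomplete even granting Step 3.
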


\begin{rem}
Our result includes the global hypoellipticity property obtained in  \cite{WXL} for  $\ell=0.$  
As it will be seen below, the situation is quite different and more delicate when the twisted 
Laplacian acts on forms rather than the functions, and here we will have to deal with 
a system of degenerate differential operators. Recall $\mathbb{L}_{p,q}$  is just a scalar operator for $\ell=0.$
\end{rem}

The paper is organized as follows. In Section \ref{sec2} we derive  the representation of  twisted Laplacian in terms of a system of differential operators.   
Section \ref{sec3} is devoted to proving  the global hypoellipticity in Sobolev space and Gevrey space for a family of parametric twisted Laplacians.  
The last section is devoted to proving the main result, by exploiting the hypoellipticity property of the twisted Laplacian with parameters.

\section{The representation of the twisted laplacian as a system of differential operators} \label{sec2}

This part is devoted to deriving the representation of $\mathbb{L}_{p,q}$ in terms of a system of second order differential operators. We shall write a $k$-form $h\in\boldsymbol{\wedge}^k\mathscr{S}'$ as
$$h=\sum_{r+s=k}{\sum_{\substack{|I|=r\\|J|=s}}}'h_{IJ}dx_I\wedge dy_J,$$
where $I=(i_1,\ldots,i_r)$, $J=(j_1,\ldots,j_s)$, $|I|=i_1+i_2+\ldots+i_r$ and likewise for $|J|$ (when $|I|=0$ there is no $dx_I$ and likewise when $|J|=0$), 
the prime in the summation means that we sum over $I$ and $J$ such that
$i_1<i_2<\ldots<i_r$ and $j_1<j_2<\ldots<j_s$, and finally $dx_I=dx_{i_1}\wedge\ldots\wedge dx_{i_r}$, $dy_J=d_{j_1}\wedge\ldots\wedge dy_{j_s}$. By linearity, to compute 
$\mathbb{L}_{p,q}$ on $\boldsymbol{\wedge}^k\mathscr{S}'$ it therefore suffices to compute it on $h=h_{IJ}dx_I\wedge dy_J.$ We have the following result.

\begin{prop}
Let $h=h_{IJ}dx_I\wedge dy_J$. Then
\begin{equation*}
\begin{aligned}
\mathbb{L}_{p,q}h:&=(D_A^*D_A+D_AD_A^*)h\\\nonumber
 &=-\sum_{j=1}^n(X_j^2+Y_j^2)h_{IJ}dx_I\wedge dy_J\\\nonumber
& \quad +\sum_{j=1}^nM_{p,q}^{(j)}(x,y)h_{IJ}dy_j\wedge\boldsymbol{i}_{\partial/\partial x_j}(dx_I)\wedge dy_J\\
&  \quad-(-1)^{|I|}\sum_{j=1}^nM_{p,q}^{(j)}(x,y)h_{IJ}dx_j\wedge dx_I\wedge\boldsymbol{i}_{\partial/\partial y_j}(dy_J),
\end{aligned}
\end{equation*}
where $M^{(j)}_{p,q}(x,y)=\frac{i}{2}(q_jx_j^{q_j-1}+p_jy_j^{p_j-1})$, $1\leq j\leq n$.
Hence, on $\ell$-forms, we have
$$\mathbb{L}_{p,q}\bigl|_{\bigwedge^\ell}=L_{p,q}\otimes\mathrm{Id}_{\bigwedge^\ell}+\sum_{j=1}^nM_{p,q}^{(j)}(x,y)\otimes\Bigl(-dx_j\wedge\boldsymbol{i}_{\partial/\partial y_j}\bigl|_{\bigwedge^\ell}+dy_j\wedge
\boldsymbol{i}_{\partial/\partial x_j}\bigl|_{\bigwedge^\ell}\Bigr),$$
where, recall, $L_{p,q}=\sum_{j=1}^n\Bigl((D_{x_j}-y_j^{p_j}/2)^2 +(D_{y_j} +x_j^{q_j}/2)^2\Bigr)=-\sum_{j=1}^n(X_j^2+Y_j^2)$.
\end{prop}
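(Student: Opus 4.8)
The plan is to expand $\mathbb{L}_{p,q}=D_AD_A^*+D_A^*D_A$ into its principal (scalar, diagonal) part, its first–order part, and its zeroth–order (algebraic) part. Write $D_A=d+A$ and $D_A^*=d^*+A^*$, where $A:=-i\,\omega_A\wedge\cdot$ is of order zero with formal adjoint $A^*=i\,\boldsymbol{i}_{X_A}$ (indeed $(\omega\wedge\cdot)^*=\boldsymbol{i}_{\omega^\sharp}$ and $\omega_A$ is exactly the Euclidean dual of $X_A$). Then
\[
\mathbb{L}_{p,q}=(dd^*+d^*d)+(dA^*+A^*d)+(d^*A+Ad^*)+(A^*A+AA^*).
\]
The first bracket is the Hodge Laplacian on $\mathbb{R}^{2n}$, acting on $h=h_{IJ}dx_I\wedge dy_J$ componentwise as $-\sum_{j=1}^n(\partial_{x_j}^2+\partial_{y_j}^2)h_{IJ}$. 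For the last bracket I will invoke the Clifford-type identity $(\omega\wedge\cdot)\,\boldsymbol{i}_V+\boldsymbol{i}_V\,(\omega\wedge\cdot)=\langle\omega,V\rangle$ with $\omega=\omega_A$, $V=X_A$, which gives $A^*A+AA^*=|\omega_A|^2\,\mathrm{Id}=\frac14\sum_{j=1}^n(y_j^{2p_j}+x_j^{2q_j})\,\mathrm{Id}$.

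For the first–order brackets I will use Cartan's formula $d\,\boldsymbol{i}_{X_A}+\boldsymbol{i}_{X_A}\,d=\mathcal{L}_{X_A}$, so that $dA^*+A^*d=i\mathcal{L}_{X_A}$; taking formal adjoints and using $(\boldsymbol{i}_V)^*=V^\flat\wedge\cdot$ (with $X_A^\flat=\omega_A$) gives $d^*A+Ad^*=-i\,(\mathcal{L}_{X_A})^*$. I then decompose $\mathcal{L}_{X_A}=X_A\otimes\mathrm{Id}+\mathcal{R}$, where $\mathcal{R}$ is the algebraic part, $\mathcal{R}(f\,dx_I\wedge dy_J)=f\,\mathcal{L}_{X_A}(dx_I\wedge dy_J)$. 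The one observation that makes the first–order part collapse to a \emph{scalar} operator is that $\mathrm{div}\,X_A=\frac12\sum_{j=1}^n\bigl(\partial_{x_j}(y_j^{p_j})-\partial_{y_j}(x_j^{q_j})\bigr)=0$, so that the scalar operator $X_A$ is skew–adjoint, $X_A^*=-X_A$, whence
\[
i\bigl(\mathcal{L}_{X_A}-(\mathcal{L}_{X_A})^*\bigr)=2i\,X_A\otimes\mathrm{Id}+i\bigl(\mathcal{R}-\mathcal{R}^T\bigr),
\]
where $\mathcal{R}^T$ is the fibrewise transpose, which coincides with the formal adjoint here because $\mathcal{R}$ has real coefficients. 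Collecting the four brackets and using $[\partial_{x_j},y_j^{p_j}]=[\partial_{y_j},x_j^{q_j}]=0$ to recognise that $-\sum_j(\partial_{x_j}^2+\partial_{y_j}^2)+2iX_A+\frac14\sum_j(y_j^{2p_j}+x_j^{2q_j})$ equals $-\sum_j(X_j^2+Y_j^2)=L_{p,q}$, I obtain $\mathbb{L}_{p,q}=L_{p,q}\otimes\mathrm{Id}+i\bigl(\mathcal{R}-\mathcal{R}^T\bigr)$.

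Finally I make $\mathcal{R}$ explicit and simplify $i(\mathcal{R}-\mathcal{R}^T)$. From $\mathcal{L}_{X_A}dx_j=d(\boldsymbol{i}_{X_A}dx_j)=d\bigl(\frac12 y_j^{p_j}\bigr)=\frac{p_j}{2}y_j^{p_j-1}dy_j$ and, symmetrically, $\mathcal{L}_{X_A}dy_j=-\frac{q_j}{2}x_j^{q_j-1}dx_j$, the Leibniz rule yields $\mathcal{R}=\sum_{j=1}^n\bigl(\frac{p_j}{2}y_j^{p_j-1}\,dy_j\wedge\boldsymbol{i}_{\partial/\partial x_j}-\frac{q_j}{2}x_j^{q_j-1}\,dx_j\wedge\boldsymbol{i}_{\partial/\partial y_j}\bigr)$. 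Since $(dy_j\wedge\boldsymbol{i}_{\partial/\partial x_j})^T=dx_j\wedge\boldsymbol{i}_{\partial/\partial y_j}$, subtracting the transpose merges the two families into
\[
\mathcal{R}-\mathcal{R}^T=\sum_{j=1}^n\frac12\bigl(p_jy_j^{p_j-1}+q_jx_j^{q_j-1}\bigr)\bigl(-dx_j\wedge\boldsymbol{i}_{\partial/\partial y_j}+dy_j\wedge\boldsymbol{i}_{\partial/\partial x_j}\bigr),
\]
and multiplying by $i$ gives exactly $\sum_j M^{(j)}_{p,q}\otimes\bigl(-dx_j\wedge\boldsymbol{i}_{\partial/\partial y_j}+dy_j\wedge\boldsymbol{i}_{\partial/\partial x_j}\bigr)$ because $M^{(j)}_{p,q}=\frac i2(q_jx_j^{q_j-1}+p_jy_j^{p_j-1})$. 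Applying this operator to $h=h_{IJ}dx_I\wedge dy_J$, together with $\boldsymbol{i}_{\partial/\partial x_j}(dx_I\wedge dy_J)=\boldsymbol{i}_{\partial/\partial x_j}(dx_I)\wedge dy_J$ and $\boldsymbol{i}_{\partial/\partial y_j}(dx_I\wedge dy_J)=(-1)^{|I|}dx_I\wedge\boldsymbol{i}_{\partial/\partial y_j}(dy_J)$, produces the component formula in the statement. I expect the only genuinely delicate point to be the sign bookkeeping in the wedge/contraction and adjoint identities rather than any of the displayed algebra; the one structural fact that must be checked — and that prevents extra first–order matrix terms from surviving — is the vanishing of $\mathrm{div}\,X_A$. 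An alternative, fully computational route is to substitute the formulas $D_Ah=\sum_j(X_jh_{IJ})\,dx_j\wedge dx_I\wedge dy_J+\sum_j(Y_jh_{IJ})\,dy_j\wedge dx_I\wedge dy_J$ and the analogous expression for $D_A^*h$ (both immediate from the definitions) into $D_A^*D_Ah+D_AD_A^*h$, and to check that all off–diagonal wedge/contraction terms cancel between the two orderings, leaving the diagonal second–order part and the commutators $[X_j,Y_j]=M^{(j)}_{p,q}$; this is longer but uses nothing beyond the Leibniz rule.
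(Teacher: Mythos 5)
Your main argument is correct but takes a genuinely different route from the paper. The paper's proof is precisely the ``fully computational route'' you sketch at the end: it writes out $D_A^*h$ from the definitions (using that $\boldsymbol{i}_{X_A}$ is a derivation), expands $D_A^*D_Ah$ and $D_AD_A^*h$ as double sums over $j,k$, adds them, and observes that the off-diagonal wedge/contraction terms cancel between the two orderings while the diagonal terms assemble into $-\sum_j(X_j^2+Y_j^2)$ and the commutators $[X_j,Y_j]=M_{p,q}^{(j)}$. Your primary route is more structural: you decompose $\mathbb{L}_{p,q}$ into $(dd^*+d^*d)+(dA^*+A^*d)+(d^*A+Ad^*)+(A^*A+AA^*)$, identify the flat Hodge Laplacian, use the Clifford relation to collapse the zeroth-order bracket to $\langle\omega_A,X_A\rangle\,\mathrm{Id}=\tfrac14\sum_j(y_j^{2p_j}+x_j^{2q_j})\,\mathrm{Id}$, and use Cartan's formula plus adjunction to reduce the first-order brackets to $i\bigl(\mathcal{L}_{X_A}-\mathcal{L}_{X_A}^*\bigr)$. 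The key structural observation you correctly isolate --- and which is invisible in the paper's brute-force computation --- is that $\mathrm{div}\,X_A=0$ (since $\partial_{x_j}(y_j^{p_j})=\partial_{y_j}(x_j^{q_j})=0$), which makes the scalar part of the Lie derivative skew-adjoint so that the scalar pieces reassemble into $L_{p,q}$ and only the algebraic curvature term $i(\mathcal{R}-\mathcal{R}^T)$ survives as a matrix contribution; your computation of $\mathcal{L}_{X_A}dx_j$, $\mathcal{L}_{X_A}dy_j$, the identification of the derivation $\mathcal{R}=\sum_j\bigl(\tfrac{p_j}{2}y_j^{p_j-1}dy_j\wedge\boldsymbol{i}_{\partial/\partial x_j}-\tfrac{q_j}{2}x_j^{q_j-1}dx_j\wedge\boldsymbol{i}_{\partial/\partial y_j}\bigr)$, and the merging under transposition are all correct. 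The paper's approach is more elementary and self-contained (only Leibniz), which suits the brief expository role the proposition plays; yours is longer to set up but exposes \emph{why} the first-order part stays scalar and identifies the lower-order matrix as the curvature $d\omega_A$ acting by the standard $\mathfrak{so}(2n)$-type operator $-dx_j\wedge\boldsymbol{i}_{\partial/\partial y_j}+dy_j\wedge\boldsymbol{i}_{\partial/\partial x_j}$, which is conceptually cleaner and would generalize immediately to a Bochner--Weitzenb\"ock statement for any divergence-free $X_A$.
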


\begin{proof}
We start by recalling that for $h=h_{IJ}dx_I\wedge dy_J$ we have, the operator $\boldsymbol{i}_X$ being a derivation, 
$$d^*h=-\sum_{j=1}^n\Bigl(\frac{\partial h_{IJ}}{\partial x_j}\boldsymbol{i}_{\partial/\partial x_j}(dx_I\wedge dy_J)+\frac{\partial h_{IJ}}{\partial y_j}\boldsymbol{i}_{\partial/\partial y_j}(dx_I\wedge dy_J)\Bigr)$$
$$=-\sum_{j=1}^n\Bigl(\frac{\partial h_{IJ}}{\partial x_j}\boldsymbol{i}_{\partial/\partial x_j}(dx_I)\wedge dy_J+(-1)^{|I|}\frac{\partial h_{IJ}}{\partial y_j}dx_I\wedge \boldsymbol{i}_{\partial/\partial y_j}(dy_J)\Bigr).$$
We therefore have that
\begin{eqnarray*} 
D_A^*h&=&d^*h+i\boldsymbol{i}_{X_A}(h)\\
&=&-\sum_{j=1}^n\Bigl(X_jh_{IJ}\boldsymbol{i}_{\partial/\partial x_j}(dx_I)\wedge dy_J+(-1)^{|I|}Y_jh_{IJ}dx_I\wedge\boldsymbol{i}_{\partial/\partial y_j}(dy_J)\Bigr).\end{eqnarray*}
We thus have on the one hand
\begin{eqnarray*}
&&D_A^*D_Ah\\
&=&-\sum_{j=1}^n(X_j^2+Y^2_j)h_{IJ}dx_I\wedge dy_J\\
& &+\sum_{j,k=1}^n(X_kX_jh_{IJ}dx_j+X_kY_jh_{IJ}dy_j)\wedge\boldsymbol{i}_{\partial/\partial x_k}(dx_I)\wedge dy_J\\
& &+(-1)^{|I|}\sum_{j,k=1}^n(Y_kX_jh_{IJ}dx_j+Y_kY_jh_{IJ}dy_j)\wedge dx_{I}\wedge\boldsymbol{i}_{\partial/\partial y_k}(dy_J),
\end{eqnarray*}
and, on the other,
\begin{eqnarray*}
D_AD_A^*h
&=&-\sum_{j,k=1}^nX_kX_jh_{IJ}dx_k\wedge\boldsymbol{i}_{\partial/\partial x_j}(dx_I)\wedge dy_J\\
& &+(-1)^{|I|}\sum_{j,k=1}^nX_kY_jh_{IJ}dx_k\wedge dx_I\wedge\boldsymbol{i}_{\partial/\partial y_j}(dy_J)\\
& &-\sum_{j,k=1}^nY_kX_jh_{IJ}dy_k\wedge\boldsymbol{i}_{\partial/\partial x_j}(dx_I)\wedge dy_J\\
& &+(-1)^{|I|}\sum_{j,k=1}^nY_kY_jh_{IJ}dy_k\wedge dx_I\wedge\boldsymbol{i}_{\partial/\partial y_j}(dy_J).
\end{eqnarray*}
Therefore
\begin{eqnarray*}
\mathbb{L}_{p,q}h:&=&(D_A^*D_A+D_AD_A^*)h\\
&=&-\sum_{j=1}^n(X_j^2+Y_j^2)h_{IJ}dx_I\wedge dy_J\\
& &+\sum_{j=1}^nM_{p,q}^{(j)}(x,y)h_{IJ}dy_j\wedge\boldsymbol{i}_{\partial/\partial x_j}(dx_I)\wedge dy_J\\
& &-(-1)^{|I|}\sum_{j=1}^nM_{p,q}^{(j)}(x,y)h_{IJ}dx_j\wedge dx_I\wedge\boldsymbol{i}_{\partial/\partial y_j}(dy_J),
\end{eqnarray*}
and also
$$\mathbb{L}_{p,q}\bigl|_{\bigwedge^\ell}=L_{p,q}\otimes\mathrm{Id}_{\bigwedge^\ell}+\sum_{j=1}^nM_{p,q}^{(j)}(x,y)\otimes\Bigl(-dx_j\wedge\boldsymbol{i}_{\partial/\partial y_j}\bigl|_{\bigwedge^\ell}+dy_j\wedge
\boldsymbol{i}_{\partial/\partial x_j}\bigl|_{\bigwedge^\ell}\Bigr).$$
This concludes the proof.
\end{proof}

In particular, one may write very easily the action of $\mathbb{L}_{p,q}$ on $1$-forms. In fact, if $f=\sum_{j=1}^n(f^j_1dx_j+f^j_2dy_j)\in\boldsymbol{\wedge}^1\mathscr{S}'$ from the above result we immediately get
\begin{equation}
\mathbb{L}_{p,q}f=\sum_{j=1}^n(L_{p,q}f^j_1-M_{p,q}^{(j)}(x,y)f^j_2)dx_j+\sum_{j=1}^n(L_{p,q}f^j_2+M_{p,q}^{(j)}(x,y)f^j_1)dy_j.
\label{eqL1-form}
\end{equation}

To simplify the notation we shall only consider the  case of dimension $n=1$  with  $\mathbb{L}_{p,q}$  acting on one forms, i.e., $\ell=1$. 
Due to the nature of $\mathbb{L}_{p,q}$, this is no loss of generality in our proof.

In this case, from (\ref{eqL1-form}) equation $\mathbb{L}_{p,q}f=g_1 dx+g_2 dy$ becomes the system
\begin{eqnarray} \label{lmeqnset}
  \left\{
\begin{array}{lll}
L_{p,q}f_1-M_{p,q}f_2=g_1\\
L_{p,q}f_2+M_{p,q}f_1=g_2.
\end{array}
\right.
\end{eqnarray}

\section{Twisted Laplacian with parameters}\label{sec3}
We study in this section the twisted Laplacian with  parameters, and obtain estimates in $H^\infty$ and in 
Gevrey spaces for the auxiliary parametric family attached to the original $\mathbb{L}_{p,q}.$  
Here the large parameter plays a crucial role to derive a priori estimates (see Subsection \ref{sub31} below).

Given $\lambda >0$, we set  $Z_\lambda=(Z_{1,\lambda},
Z_{2,\lambda})$ , where $Z_{j,\lambda}, j=1,2,$ are the first-order operators defined by
\begin{eqnarray*}
 Z_{1,\lambda}=D_x- \frac{\lambda^{p+1}
  }{2}y^p,\quad Z_{2,\lambda}=D_y+\frac{\lambda^{q+1}}{2}x^q.
\end{eqnarray*}
We write the twisted Laplacian with parameter as follows
\begin{equation}\label{tl}
 L_{p,q;\lambda}=\sum_{j=1}^2 Z_{j,\lambda}^2=\inner{D_x- \frac{\lambda^{p+1}
  }{2}y^p}^2+\inner{D_y+\frac{\lambda^{q+1}}{2}x^q}^2,
\end{equation}
Note that $Z_{j,\lambda}, j=1,2,$ are (formally) self-adjoint (unbounded) operators in
$L^2(\R^2)$, with common core $\mathscr{S}(\mathbb{R}^2).$
In addition, we introduce the polynomial term with parameter $\mpq$ defined by
\begin{equation}\label{mpq}
M_{p,q;\lambda}=\frac{i}{2}(q\lambda^{q+1}x^{q-1}+p\lambda^{p+1}y^{p-1}).
\end{equation}
Then the representation of $\mathbb{L}_{p,q}$ in \reff{eqL1-form} and equation \reff{lmeqnset} with parameters can be written respectively as
\begin{equation}\label{source1}
		\mathbb{L}_{p,q,\lambda} f=\lambda\inner{L_{p,q,\lambda}f_1-M_{p,q,\lambda}f_2} dx+\lambda\inner{L_{p,q,\lambda}f_2+M_{p,q,\lambda}f_1} dy,
\end{equation}
and
\begin{eqnarray} \label{eqnset}
  \left\{
\begin{array}{lll}
L_{p,q;\lambda}f_{1\lambda}-M_{p,q;\lambda}f_{2\lambda}=g_{1\lambda}\\
L_{p,q;\lambda}f_{1\lambda}+M_{p,q;\lambda}f_{2\lambda}=g_{2\lambda},
\end{array}
\right.
\end{eqnarray}
where $$f_{j\lambda}(x,y)=f_j(\lambda x,\lambda y),\quad \quad g_{j\lambda}=\lambda^2 g_j(\lambda x,\lambda y),\quad j=1,2.$$

Next, given $k\in\mathbb N,$ we consider the  space
\begin{eqnarray*}
 H^k=\set{u\in \Sal'(\mathbb{R}^2);\quad \forall\abs\alpha+\abs\beta\leq
  k,~D_x^\alpha D_y^\beta u\in L^2},
\end{eqnarray*}
and
\begin{eqnarray*}
\mathcal H_{Z_\lambda}^k=\set{u\in \Sal'(\mathbb{R}^2);\quad \forall\abs\alpha+\abs\beta\leq
  k,~ Z_{j,\lambda} D_x^\alpha D_y^\beta u\in L^2, ~ j=1,2}.
\end{eqnarray*}
Note that
\begin{eqnarray*}
H^{k+1}\bigcap\mathcal H_{Z_\lambda}^k\subset \set{u;\quad \forall\abs\alpha+\abs\beta\leq k,~   \comii x^{q} D_x^\alpha D_y^\beta u, ~ \comii
  y^{p} D_x^\alpha D_y^\beta u\in L^2}
\end{eqnarray*}
(where $\comii x=(1+|x|^2)^{1/2}$, and analogously for $\comii y$).
We shall write
\begin{eqnarray*}
 H^{\infty}=\bigcap_{k\geq 1} H^k, \quad \mathcal H_{Z_\lambda}^\infty=\bigcap_{k\geq 1}
\mathcal H_{Z_\lambda}^k,
\end{eqnarray*}
and 
\begin{eqnarray*} \,\, \norm{Z_\lambda g}_{L^2}=\inner{ \norm{Z_{1,\lambda}g}_{L^2}^2+ \norm{Z_{2,\lambda}g}_{L^2}^2}^{1/2}.
\end{eqnarray*}

Furthermore, recall that Baker-Campbell-Hausdorff formula gives (see Nier \cite[Lemma 4.14]{Nier})
\begin{gather*}
    \forall v\in \mathscr{S}(\mathbb{R}^2),~\forall I=(i_1,\cdots, i_k)\in \set{1,2}^k,\\
    \norm{ \abs{Z_{I,\lambda}}^{1/\sabs{I}} v}_{L^2}\leq C_*
 \sum_{j=1}^2\norm{Z_{j,\lambda}v}_{L^2}+C_*\norm{v}_{L^2},
\end{gather*}
where  $C_*$ is some constant independent of $\lambda$, $\sabs {I}=$length of the commutator, $Z_{I,\lambda}$ being the commutator
\[
    Z_{I,\lambda}=[Z_{i_1,\lambda},~[Z_{i_2,\lambda},\cdots, [Z_{i_{k-1},\lambda},~Z_{i_k,\lambda}]~]~].
\]
In particular, we have that
\begin{gather*}
   \forall v\in \mathscr{S}(\mathbb{R}^2),~ \forall \lambda> 0, \\
   \norm{\lambda v}_{L^2}+\norm{|\lambda^{p+1}y^{p-1}+\lambda^{q+1}x^{q-1}|^{1/2} u}_{L^2}\leq
 C_*
  \sum_{j=1}^2\norm{Z_{j,\lambda}v}_{L^2}+C_*\norm{v}_{L^2},
\end{gather*}
and that, when $p,q\geq 2$, for any given $v\in \mathscr{S}(\mathbb{R}^2)$ and any given $\lambda>0$,
\begin{equation*}
  \norm{ \lambda^{(p+1)/3}y^{(p-2)/3} v}_{L^2}+\norm{ \lambda^{(q+1)/3}x^{(q-2)/3} v}_{L^2}\leq C_*
  \sum_{j=1}^2\norm{Z_{j,\lambda}u}_{L^2}+C_*\norm{v}_{L^2}.
\end{equation*}
Consequently,  we can find two positive constants $C$  and
$\lambda_0\geq 1$, both depending only on the above $C_*$,   such that $\forall~ v\in \mathscr{S}(\mathbb{R}^2)$ and $\forall ~\lambda \geq \lambda_0,$
\begin{equation}\label{useful1}
\begin{split}
 \norm{ \inner{\lambda^{p+1}y^{p-1}+\lambda^{q+1}x^{q-1}}^{1/2}  v}_{L^2}  + \norm{\lambda v}_{L^2}\leq C
  \sum_{j=1}^2\norm{Z_{j,\lambda}v}_{L^2},
\end{split}
\end{equation}
and, when $p,q\geq2,$
\begin{equation}\label{useful2}
  \norm{ \lambda^{\frac{p+1}{3}}y^{\frac{p-2}{3}} v}_{L^2}+\norm{ \lambda^{\frac{q+1}{3}}x^{\frac{q-2}{3}} v}_{L^2}\leq C
  \sum_{j=1}^2\norm{Z_{j,\lambda}v}_{L^2}.
\end{equation}

\subsection{An a priori estimate}\label{sub31}
 Here we will prove an a priori estimate, which is crucial for the global  hypoellipiticity in Gevrey spaces as well as in the space $H^\infty.$
In what follows we set $\sum_{j=k}^n=0$ whenever $n<k.$
\begin{prop}\label{pr1}
 Let  $p,q\geq 1$ and  $\lpq$  and $\mpq$ be the operator respectively  given in
\reff{tl} and \reff{mpq}.  Then
  there exists a constant $C_0>0$,  depending only on $p, q$ and the
  constant $C$ given in \reff{useful1},  such that,     for every   integer
  $m\geq 1$   and   any given $ f_1,f_2 \in
H^\infty \bigcap \mathcal H_{Z_\lambda}^\infty$, we have:

\noindent (i) If  $p,q\geq 2$ then
\begin{eqnarray}\label{dy}
 \begin{split}
  &\sum_{i=1}^2\norm{\lambda  D_x^m f_i}_{L^2} +\sum_{i=1}^2\norm{\lambda  D_y^m f_i}_{L^2}
 +\sum_{i=1}^2\norm{Z_\lambda  D_x^m f_i}_{L^2} +\sum_{i=1}^2\norm{Z_\lambda  D_y^m f_i}_{L^2}\\
  &~\leq C_0  \inner{\norm{ D_y^m\inner{ \lpq f_1-\mpq f_2}}_{L^2}+\norm{ D_x^m\inner{ \lpq f_1-\mpq f_2}}_{L^2 }}\\
 &~+C_0  \inner{\norm{ D_y^m\inner{ \lpq f_1+\mpq f_2}}_{L^2}+\norm{ D_x^m\inner{ \lpq f_1+\mpq f_2}}_{L^2 }}\\
&~+C_0  \lambda^{p+q} \sum_{i=1}^2\inner{
m^{A_{p,q}}\norm{Z_ \lambda D_x^{m-1}   f_i}_{L^2} +
  m^{B_{p,q}}\norm{Z_ \lambda D_y^{m-1}   f_i}_{L^2}}\\
&~+C_0  \lambda^{p+q} \sum_{i=1}^2\inner{
\frac{m!}{(2q)!(m-2q)!}\norm{  \lambda D_x^{m-2q}   f_i}_{L^2} +
  \frac{m!}{(2p)!(m-2p)!}\norm{\lambda D_y^{m-2p}  f_i}_{L^2}}\\
&~+C_0  \lambda^{p+q}  \sum_{i=1}^2 \inner{\sum_{j=2}^{ 2q-1 } \frac{m!}{(m-j)!}
\norm{ Z_\lambda D_x^{m-j} f_i }_{L^2}+\sum_{j=2}^{ 2p-1 }\frac{m!}{(m-j)!}\norm{ Z_\lambda D_y^{m-j}
    f_i }_{L^2}}\\
&~+C_0  \lambda^{p+q}  \sum_{i=1}^2\inner{\sum_{j=2}^{ 2q-1 } \frac{m!}{(m-j)!}
\norm{ \lambda D_x^{m-j} f_i }_{L^2}+\sum_{j=2}^{ 2p-1 }\frac{m!}{(m-j)!}\norm{ \lambda D_y^{m-j} f_i }_{L^2}}\\
&~+C_0  \lambda^{p+q} \sum_{i=1}^2\inner{ \sum_{j=2}^{ 2q-1 } \frac{m!}{(m-j)!}
\norm{ \lambda D_x^{m-j+1} f_i }_{L^2}+\sum_{j=2}^{ 2p-1 } \frac{m!}{(m-j)!}\norm{ \lambda D_y^{m-j+1} f_i }_{L^2}}\\
&~+C_0  \lambda^{p+q}  \sum_{i=1}^2\inner{\sum_{j=3}^{ q } \frac{(m-1)!}{(m-j)!}
\norm{ Z_\lambda D_x^{m-j+1} f_i }_{L^2}+\sum_{j=3}^{ p } \frac{(m-1)!}{(m-j)!}\norm{ Z_\lambda
    D_y^{m-j+1} f_i }_{L^2}}\\
&~+C_0  \lambda^{p+q} \sum_{i=1}^2 \inner{\sum_{j=3}^{ q} \frac{(m-1)!}{(m-j)!}
\norm{ \lambda D_x^{m-j+2} f_i }_{L^2}+\sum_{j=3}^{ p } \frac{(m-1)!}{(m-j)!}\norm{ \lambda D_y^{m-j+2}
    f_i }_{L^2}},
 \end{split}
 \end{eqnarray}
the exponents $A(p,q)$ and
$B(p,q)$  in the forth line being given by
\[\left\{
\begin{array}{lll}
A(p,q)=\frac{q+1}{2}, ~B(p,q)=\frac{p+1}{2}, &\quad \textit{when both $p$ and $q$ are~ odd},\\[4pt]
A(p,q)=\frac{2q+2}{3}, ~B(p,q)=\frac{2p+2}{3}, &\quad \textit{otherwise} .
\end{array}
\right.
\]
(ii)  If  $p\geq 1$ and $q=1$, then
\begin{eqnarray}\label{dx1}
 \begin{split}
  & \sum_{i=1}^2\norm{\lambda  D_x^m f_i}_{L^2} +\sum_{i=1}^2  \norm{Z_\lambda  D_x^m f_i}_{L^2}\\
&~ \leq C_0   \norm{ D_x^m \inner{\lpq f_1-\mpq f_2 } }_{L^2}\\
 &~+C_0   \norm{ D_x^m \inner{\lpq f_2+\mpq f_1 } }_{L^2}\\
&~ +C_0\lambda^{q}  m  \sum_{i=1}^2 \norm{\lambda D_x^{m-1}   f_i}_{L^2}+C_0 \lambda^{q}
m^2 \sum_{i=1}^2 \norm{ \lambda D_x^{m-2}   f_i}_{L^2},
 \end{split}
 \end{eqnarray}
and
\begin{eqnarray}\label{dx2}
 \begin{split}
& \sum_{i=1}^2\norm{\lambda  D_y^m f_i}_{L^2} +\sum_{i=1}^2  \norm{Z_\lambda  D_y^m f_i}_{L^2}\\
 &~ \leq C_0   \norm{ D_x^m \inner{\lpq f_1-\mpq f_2 } }_{L^2}\\
 &~+C_0  \norm{ D_x^m \inner{\lpq f_2+\mpq f_1 } }_{L^2}\\
 &~+C_0 \sum_{i=1}^2 \norm{\lambda  D_x^m f_i}_{L^2}\\
 &~+C_0  \lambda^{p} \sum_{i=1}^2\inner{ m^{\inner{p+1}/2}\norm{Z_ \lambda D_y^{m-1}   f_i}_{L^2}
+\frac{m!}{(2p)!(m-2p)!}\norm{\lambda D_y^{m-2p}   f_i}_{L^2}}\\
&~+C_0  \lambda^{p} \sum_{i=1}^2  \sum_{j=2}^{ 2p-1 } \frac{m!}{(m-j)!}
\inner{ \norm{ Z_\lambda D_y^{m-j}f_i }_{L^2}+\norm{ \lambda D_y^{m-j} f_i }_{L^2}}\\
&~++C_0  \lambda^{p} \sum_{i=1}^2  \sum_{j=2}^{ 2p-1 } \frac{m!}{(m-j)!}
\inner{\norm{ \lambda D_y^{m-j+1} f_i }_{L^2}+\norm{ \lambda D_x^{m-j+1} f_i }_{L^2}}.
\end{split}
\end{eqnarray}
 
\end{prop}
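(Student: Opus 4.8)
The plan is to establish \reff{dy}, \reff{dx1} and \reff{dx2} directly, for each fixed $m$, by differentiating the system \reff{eqnset} and running an energy argument (the empty–sum convention disposing of the small values of $m$). I work, as allowed, with $n=1$ and $\ell=1$, and I only explain the $D_x^m$–part of \reff{dy}, the $D_y^m$–part being obtained symmetrically.

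First I would apply $D_x^m$ to both equations of \reff{eqnset}; commuting $D_x^m$ past $L_{p,q;\lambda}$ and $\mpq$ turns them into a system of the same shape for $(D_x^m f_1,D_x^m f_2)$, the right–hand sides being $D_x^m g_{i\lambda}$ modified by $[D_x^m,L_{p,q;\lambda}]f_i$ and $[D_x^m,\mpq]f_{3-i}$. Pairing the differentiated equations with $D_x^m f_1$ and $D_x^m f_2$ in $L^2$, adding, taking real parts, and using that the $Z_{j,\lambda}$ are formally self–adjoint, so $\mathrm{Re}\,(L_{p,q;\lambda}v,v)=\norm{Z_\lambda v}_{L^2}^2$, I obtain an identity of the form
\[
\sum_{i=1}^2\norm{Z_\lambda D_x^m f_i}_{L^2}^2+2\,\mathrm{Re}\,\bigl(\mpq D_x^m f_1,\,D_x^m f_2\bigr)=\mathrm{Re}\sum_{i=1}^2\bigl(D_x^m g_{i\lambda}-[D_x^m,L_{p,q;\lambda}]f_i+R_i,\ D_x^m f_i\bigr),
\]
where $R_1=[D_x^m,\mpq]f_2$, $R_2=-[D_x^m,\mpq]f_1$ (these vanish when $q=1$, since then $\mpq$ does not depend on $x$). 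The source pairings $\bigl(D_x^m g_{i\lambda},D_x^m f_i\bigr)$ are handled by Cauchy–Schwarz together with \reff{useful1} in the sharp form $\norm{D_x^m f_i}_{L^2}\le C\lambda^{-1}\norm{Z_\lambda D_x^m f_i}_{L^2}$: a small multiple of $\sum_i\norm{Z_\lambda D_x^m f_i}_{L^2}^2$ is absorbed on the left and there remains exactly the contribution $\lesssim\norm{D_x^m(L_{p,q;\lambda}f_1\mp\mpq f_2)}_{L^2}$ in \reff{dy}; the $\norm{\lambda D_x^m f_i}_{L^2}$–part of the left side of \reff{dy} is then added in by applying \reff{useful1} to $v=D_x^m f_i$.

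Next I would expand the commutators by Leibniz. Since $\mpq$ is polynomial of degree $q-1$ in $x$ and $p-1$ in $y$ while $L_{p,q;\lambda}$ contains $x^q,y^p$ quadratically, $[D_x^m,L_{p,q;\lambda}]$ and $[D_x^m,\mpq]$ are \emph{finite} sums of terms $\tfrac{m!}{(m-j)!}\,(\text{polynomial weight})\,D_x^{m-j}$ with $j\le 2q$ (resp. $j\le q-1$), the weights being of the types $\lambda^{q+1}x^{q-k}Z_{2,\lambda}$, $\lambda^{2q+2}x^{2q-k}$, and their $y,p$ analogues. Each such weighted, lower–order term I would estimate by iterating the subelliptic inequalities \reff{useful1} and — when $p,q\ge2$ — the interpolated inequality \reff{useful2}, together with Hölder's inequality to supply the intermediate powers of $x,y$ not directly covered by \reff{useful1}/\reff{useful2}. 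The Leibniz binomial coefficients (after absorbing the factorials of the fixed orders) produce the weights $\tfrac{m!}{(m-j)!}$ and $\tfrac{(m-1)!}{(m-j)!}$; the bookkeeping of $\lambda$–powers produces the overall factor $\lambda^{p+q}$; and the way \reff{useful1} and \reff{useful2} have to be balanced produces the exponents $A_{p,q},B_{p,q}$ — one gets by with \reff{useful1} alone, so that they equal $\tfrac{q+1}{2},\tfrac{p+1}{2}$, exactly when $p$ and $q$ are odd (then $x^{q-1},y^{p-1}\ge0$, so no sign — hence no derivative — is lost in the splitting), while otherwise \reff{useful2} must be used and only $\tfrac{2q+2}{3},\tfrac{2p+2}{3}$ survives.

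The step I expect to be the real obstacle is the off–diagonal ``curvature'' term $2\,\mathrm{Re}\,\bigl(\mpq D_x^m f_1,D_x^m f_2\bigr)$: since $\mpq=[Z_{2,\lambda},Z_{1,\lambda}]$, integration by parts rewrites it as $2\,\mathrm{Re}\bigl[(Z_{1,\lambda}D_x^m f_1,Z_{2,\lambda}D_x^m f_2)-(Z_{2,\lambda}D_x^m f_1,Z_{1,\lambda}D_x^m f_2)\bigr]$, which is of \emph{critical} size $\asymp\norm{Z_\lambda D_x^m f_1}_{L^2}\norm{Z_\lambda D_x^m f_2}_{L^2}$ and thus \emph{cannot} be absorbed into the diagonal energy $\sum_i\norm{Z_\lambda D_x^m f_i}_{L^2}^2$ coming from the two component equations alone. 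To get around this I would diagonalize the system: with $u=f_1+if_2$ and $w=f_1-if_2$ one has $(L_{p,q;\lambda}\pm i\mpq)$ acting on $u,w$ respectively, and
\[
L_{p,q;\lambda}\pm i\mpq=(Z_{1,\lambda}\mp iZ_{2,\lambda})^{*}(Z_{1,\lambda}\mp iZ_{2,\lambda})\ \geq\ 0,
\]
so that each of the two \emph{scalar} operators $L_{p,q;\lambda}\pm i\mpq$ is again a twisted Laplacian attached to a pair of first–order, formally self–adjoint operators with the same underlying geometry, to which the scalar estimates (as in the $\ell=0$ case of \cite{WXL}) apply; here $i\mpq=-\tfrac12(q\lambda^{q+1}x^{q-1}+p\lambda^{p+1}y^{p-1})$ is real and, when $p$ and $q$ are both odd, \emph{definite}, which is again why no interpolation loss occurs in that case. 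Reassembling the estimates for $u$ and $w$ (equivalently, closing the identity above with the scalar bounds for $L_{p,q;\lambda}\pm i\mpq$) and absorbing the leftover $\tfrac12\sum_i\norm{Z_\lambda D_x^m f_i}_{L^2}^2$ from the sources yields \reff{dy}, and the same scheme with $D_y^m$ in place of $D_x^m$ gives the remaining terms on its left side. Finally, \reff{dx1} is the special case $q=1$, where $[D_x^m,\mpq]=0$ and only the $Z_{2,\lambda}$–commutator of $L_{p,q;\lambda}$ — which costs two $x$–derivatives and one factor $\lambda^{q+1}$ — survives; and \reff{dx2} follows by estimating $\norm{\lambda D_y^m f_i}_{L^2}+\norm{Z_\lambda D_y^m f_i}_{L^2}$ through the $D_x^m$–source together with the already–bounded $\norm{\lambda D_x^m f_i}_{L^2}$.
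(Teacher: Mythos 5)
Your plan reproduces the skeleton of the paper's argument (differentiate the system, pair in $L^2$ with $D_x^m f_i$ or $D_y^m f_i$, expand the commutators with $\lpq$ and $\mpq$ by Leibniz, and feed the resulting weighted lower-order terms into \reff{useful1}--\reff{useful2}; this is exactly what Lemmas \ref{lemma1}--\ref{lemma5} do). You also correctly identify the key obstruction, namely the off-diagonal coupling term. But the device you propose to handle it — diagonalization via $u=f_1+if_2$, $w=f_1-if_2$ — has a genuine gap.

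The algebra $\lpq\pm i\mpq=(Z_{1,\lambda}\mp iZ_{2,\lambda})^*(Z_{1,\lambda}\mp iZ_{2,\lambda})$ is correct, and the system does decouple into $(\lpq+i\mpq)u=g_{1\lambda}+ig_{2\lambda}$ and $(\lpq-i\mpq)w=g_{1\lambda}-ig_{2\lambda}$. However, $\lpq\pm i\mpq$ is \emph{not} a twisted Laplacian ``attached to a pair of first-order, formally self-adjoint operators'': it is $W^*W$ with a \emph{single}, non-self-adjoint first-order factor $W=Z_{1,\lambda}\mp iZ_{2,\lambda}$. Pairing $(\lpq+i\mpq)u$ against $u$ gives
\[
\norm{Wu}_{L^2}^2=\norm{Z_{1,\lambda}u}_{L^2}^2+\norm{Z_{2,\lambda}u}_{L^2}^2+\bigl(i\mpq u,u\bigr)_{L^2},
\]
and since $i\mpq=-\tfrac12\bigl(q\lambda^{q+1}x^{q-1}+p\lambda^{p+1}y^{p-1}\bigr)$ the last term has the \emph{wrong} sign (it is $\le 0$ when $p,q$ are odd). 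Thus $\norm{Wu}_{L^2}^2$ does not dominate $\sum_j\norm{Z_{j,\lambda}u}_{L^2}^2$, which is precisely the quantity the subelliptic inequalities \reff{useful1}--\reff{useful2} require on the left side. In fact $W$ is an annihilation-type combination with non-trivial kernel: for $p=q=1$ it kills the Gaussian ground state, so there can be no estimate of the form $\norm{Wu}_{L^2}\gtrsim\norm{\lambda u}_{L^2}$. Consequently the scalar estimates of \cite{WXL}, which rest on the sub-ellipticity of $Z_{1,\lambda}^2+Z_{2,\lambda}^2$, cannot be transferred to $\lpq\pm i\mpq$; you would get good control for one diagonalized unknown (the one attached to $W^*$), but not the other.

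The paper's remedy is different and does not symmetrize the pairings at all. In Lemma \ref{lemma1} the equation for $f_1$ is paired only with $D_y^m f_1$, and the coupling term $I_8=\bigl(\mpq D_y^m f_2,\,D_y^m f_1\bigr)_{L^2}$ is treated by an \emph{asymmetric} Cauchy--Schwarz with a weight $\lambda\cdot\lambda^{-1}$: after invoking \reff{useful1}, the absorbed piece carries $\eps\lambda^2\norm{Z_\lambda D_y^m f_1}_{L^2}^2$, while the retained piece carries an explicit $\lambda^{-2}$ in front of $\norm{Z_\lambda D_y^m f_2}_{L^2}^2$, which becomes the $C_0\lambda^{-1}\norm{Z_\lambda D_y^m f_2}_{L^2}$ term in \reff{dy+}. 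Lemma \ref{lemma2} produces the companion estimate with $f_1,f_2$ swapped, and adding the two allows the cross terms, now prefactored by $C_0/\lambda$, to be reabsorbed for $\lambda$ large. So it is precisely the free large parameter $\lambda$ — not a diagonalization — that breaks the criticality you diagnosed; that is the structural reason the parametric operator \reff{tl} was introduced. Your proposal, as written, would need to be replaced by this weighted-absorption scheme (or something equivalent) to close.
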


This proposition can be deduced from  the following series of lemmas.
\begin{lem}\label{lemma1}
Let  $p,q\geq 1$ and  let $\lpq$ and $\mpq$ be the operator  respectively given in
\reff{tl}and \reff{mpq}.  Then
  there exists a constant $ C_0$,  depending only on $p, q$ and the
  constant $C$ given in \reff{useful1},  such that  for every integer $m\geq
  1$   and   any  given $f_1,f_2\in
H^\infty \bigcap \mathcal H_{Z_\lambda}^\infty$, we have
\begin{eqnarray}\label{dy+}
 \begin{split}
  &\norm{\lambda  D_y^m f_1}_{L^2} +\norm{Z_\lambda  D_y^m f_1}_{L^2}\\
 &~\leq  C_0  \norm{ D_y^m\inner{ \lpq f_1-\mpq f_2}}_{L^2}\\
 &~+C_0m\norm{ \lambda^{p+1} y^{p-1}D_y^{m-1}   f_1}_{L^2}\\
&~+C_0m\norm{ \lambda^{p+1} y^{p-1}D_y^{m-1}   f_2}_{L^2}+C_0m\lambda^p \norm{\lambda D_y^{m-1}f_2}_{L^2}\\
&~+C_0  \lambda^{p}\frac{m!} {(2p)!(m-2p)!}\norm{\lambda  D_y^{m-2p}   f_1}_{L^2}\\
&~+C_0   \sum_{j=2}^{2p-1}\frac{m!}{j!(m-j)!}   \norm{\lambda^{p+1} y^{\delta_j}D_y^{m-j}  f_1 }_{L^2}\\
&~+ C_0   \sum_{j=3}^{p}\frac{(m-1)!}{j!(m-j)!}   \norm{\lambda^{p+1}y^{\rho_j }D_y^{m-j+1}  f_1 }_{L^2}\\
&~+C_0   \sum_{j=2}^{p-1}\frac{(m-1)!}{j!(m-j)!}   \norm{\lambda^{p+1}y^{\eta_j }D_y^{m-j}  f_2 }_{L^2}\\
&~+C_0\frac{1}{\lambda}\norm{Z_\lambda  D_y^m f_2}_{L^2},
 \end{split}
 \end{eqnarray}
where  $\delta_j, \rho_j,\eta_j\in\set{1,2,\cdots, p-1}$.
Similarly we have
\begin{eqnarray}\label{dx+}
 \begin{split}
 & \norm{\lambda  D_x^m f_1}_{L^2} +\norm{Z_\lambda  D_x^m f_1}_{L^2}\\
 &~\leq  C_0  \norm{ D_x^m\inner{ \lpq f_1-\mpq f_2}}_{L^2}\\
 &~+C_0m\norm{ \lambda^{q+1} x^{q-1}D_x^{m-1}   f_1}_{L^2}\\
&~+C_0m\norm{ \lambda^{q+1} x^{q-1}D_x^{m-1}   f_2}_{L^2}+C_0m\lambda^q \norm{\lambda D_x^{m-1}f_2}_{L^2}\\
&~+C_0  \lambda^{q}\frac{m!} {(2p)!(m-2p)!}\norm{\lambda  D_x^{m-2q}   f_1}_{L^2}\\
&~+C_0   \sum_{j=2}^{2q-1}\frac{m!}{j!(m-j)!}   \norm{\lambda^{q+1} x^{\delta_j}D_x^{m-j}  f_1 }_{L^2}\\
&~+ C_0   \sum_{j=3}^{q}\frac{(m-1)!}{j!(m-j)!}   \norm{\lambda^{q+1}x^{\rho_j }D_x^{m-j+1}  f_1 }_{L^2}\\
&~+C_0   \sum_{j=2}^{q-1}\frac{(m-1)!}{j!(m-j)!}   \norm{\lambda^{q+1}x^{\eta_j }D_x^{m-j}  f_2 }_{L^2}\\
&~+C_0\frac{1}{\lambda}\norm{Z_\lambda  D_x^m f_2}_{L^2},
\end{split}
 \end{eqnarray}
where  $\delta_j, \rho_j,\eta_j\in\set{1,2,\cdots, p-1}$ .
\end{lem}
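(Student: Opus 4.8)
I would prove \reff{dy+}; the companion estimate \reff{dx+} then follows from it by the symmetry that exchanges $(x,D_x,q)$ with $(y,D_y,p)$, i.e. $Z_{1,\lambda}\leftrightarrow Z_{2,\lambda}$. The hypothesis $f_1,f_2\in H^\infty\cap\mathcal H_{Z_\lambda}^\infty$ makes all the integrations by parts below legitimate, and the subelliptic estimate \reff{useful1} is the workhorse. First I would reduce to bounding $\norm{Z_\lambda D_y^m f_1}_{L^2}$: applying \reff{useful1} to $v=D_y^m f_1$ gives $\norm{\lambda D_y^m f_1}_{L^2}\le C\norm{Z_\lambda D_y^m f_1}_{L^2}$, so it is enough to estimate $\norm{Z_\lambda D_y^m f_1}_{L^2}$ and reinstate $\norm{\lambda D_y^m f_1}_{L^2}$ on the left at the very end.

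Since $Z_{1,\lambda},Z_{2,\lambda}$ are formally self-adjoint and $\lpq=Z_{1,\lambda}^2+Z_{2,\lambda}^2$, writing the trivial identity $\lpq f_1=(\lpq f_1-\mpq f_2)+\mpq f_2$ yields $\norm{Z_\lambda D_y^m f_1}_{L^2}^2=\inner{\lpq D_y^m f_1,\,D_y^m f_1}_{L^2}=A+B+R$, where $A=\inner{D_y^m(\lpq f_1-\mpq f_2),\,D_y^m f_1}_{L^2}$ is a source term, $B=\inner{D_y^m(\mpq f_2),\,D_y^m f_1}_{L^2}$ is the coupling term, and $R=\inner{[\lpq,D_y^m]f_1,\,D_y^m f_1}_{L^2}$ is a commutator term. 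The source term is immediate: $A\le\norm{D_y^m(\lpq f_1-\mpq f_2)}_{L^2}\norm{D_y^m f_1}_{L^2}\le C\lambda^{-1}\norm{D_y^m(\lpq f_1-\mpq f_2)}_{L^2}\norm{Z_\lambda D_y^m f_1}_{L^2}$ by Cauchy--Schwarz together with $\norm{D_y^m f_1}_{L^2}\le\lambda^{-1}\norm{\lambda D_y^m f_1}_{L^2}\le C\lambda^{-1}\norm{Z_\lambda D_y^m f_1}_{L^2}$, which after the final division produces the leading term of \reff{dy+}. So only $R$ and $B$ remain.

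\textbf{The commutator term $R$.} The decisive structure is that $[Z_{2,\lambda},D_y]=0$ while $[Z_{1,\lambda},D_y]=-\tfrac i2\,p\lambda^{p+1}y^{p-1}$ is multiplication by a multiple of $y^{p-1}$; hence $[\lpq,D_y]=[Z_{1,\lambda}^2,D_y]=-\tfrac i2\,p\lambda^{p+1}\bigl(Z_{1,\lambda}y^{p-1}+y^{p-1}Z_{1,\lambda}\bigr)$, and, moving $D_y^m$ successively through $Z_{1,\lambda}$, the whole of $[\lpq,D_y^m]f_1$ becomes a finite sum of terms $\lambda^{p+1}\bigl(Z_{1,\lambda}y^{s}+y^{s}Z_{1,\lambda}\bigr)D_y^{m-k}f_1$, $0\le s\le p-1$, with binomial-type coefficients. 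Pairing with $D_y^m f_1$, moving the $Z_{1,\lambda}$'s across, and using that each power $y^{s}$ commutes with $Z_{1,\lambda}$, $R$ reduces to a finite sum of inner products $\lambda^{p+1}\inner{y^{s}D_y^{k'}f_1,\,Z_{1,\lambda}D_y^{k''}f_1}_{L^2}$; each is estimated by Cauchy--Schwarz --- distributing half the weight $|y|^{p-1}$ whenever a factor $D_y^m f_1$ occurs (so as to extract $\norm{Z_\lambda D_y^m f_1}_{L^2}$ via \reff{useful1}) and, where needed, re-expanding $Z_{1,\lambda}D_y^m f_1=D_y^m Z_{1,\lambda}f_1+[Z_{1,\lambda},D_y^m]f_1$ to replace a $Z_{1,\lambda}D_y^{k}f_1$ by a $\lambda D_y^{k}f_1$ (or to raise a derivative). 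Reorganising, one recovers precisely the $f_1$-lines --- lines three, five, six and seven --- of \reff{dy+}, with the stated coefficients $\tfrac{m!}{j!(m-j)!}$, $\tfrac{(m-1)!}{j!(m-j)!}$, summation ranges, and exponents $\delta_j,\rho_j\in\set{1,\dots,p-1}$.

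\textbf{The coupling term $B$ and conclusion.} Write $\mpq=\tfrac i2\,q\lambda^{q+1}x^{q-1}+\tfrac i2\,p\lambda^{p+1}y^{p-1}$. Since $x^{q-1}$ commutes with $D_y^m$, that part of $B$ equals $\tfrac i2\,q\lambda^{q+1}\inner{x^{q-1}D_y^m f_2,\,D_y^m f_1}_{L^2}$; pairing against the \emph{unweighted} $D_y^m f_1$ (so that $\norm{D_y^m f_1}_{L^2}\le C\lambda^{-1}\norm{Z_\lambda D_y^m f_1}_{L^2}$ supplies the favourable power $\lambda^{-1}$) and bounding the $f_2$-factor by $\norm{(\lambda^{q+1}|x|^{q-1})^{1/2}D_y^m f_2}_{L^2}\le C\norm{Z_\lambda D_y^m f_2}_{L^2}$ from \reff{useful1} yields the last right-hand term $C_0\lambda^{-1}\norm{Z_\lambda D_y^m f_2}_{L^2}$. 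The $y^{p-1}$ part of $\mpq$, Leibniz-expanded, gives the $f_2$-lines four and eight of \reff{dy+} (the $j=0$ term being a ``top'' contribution, handled by splitting $|y|^{p-1}$ evenly and using \reff{useful1}). Collecting everything, $\norm{Z_\lambda D_y^m f_1}_{L^2}^2$ is bounded by $\norm{Z_\lambda D_y^m f_1}_{L^2}$ times the sum of the terms on the right of \reff{dy+} (apart from the $\norm{\lambda D_y^m f_1}_{L^2}$ entry), possibly after first absorbing on the left a few top-order contributions that appear as a small multiple of $\norm{Z_\lambda D_y^m f_1}_{L^2}^2$; dividing by $\norm{Z_\lambda D_y^m f_1}_{L^2}$ and restoring $\norm{\lambda D_y^m f_1}_{L^2}$ on the left through \reff{useful1} gives \reff{dy+}, and \reff{dx+} follows by the $x\leftrightarrow y$, $p\leftrightarrow q$ symmetry. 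The only genuinely delicate point --- the rest being routine integration by parts --- is the combinatorial bookkeeping in the two steps above: performing the iterated commutator and Leibniz expansions, separating the ``top-order'' inner products (to be absorbed) from those to be kept, and verifying that each kept contribution matches, term by term and with the exact factorial coefficient, power of $\lambda$, and exponent of $y$, one of the many summands listed on the right of \reff{dy+}.
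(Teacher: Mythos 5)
Your proposal follows essentially the same route as the paper's proof: both start from the inner product $\langle \lpq D_y^m f_1,\,D_y^m f_1\rangle = \norm{Z_\lambda D_y^m f_1}_{L^2}^2$, split it into a source term, a commutator-with-$\lpq$ term, and an $\mpq$-coupling term (your $A$, $R$, $B$ correspond to the paper's $I_1$, $I_2$--$I_5$, and $I_6$--$I_8$), and estimate each piece by Cauchy--Schwarz together with the subelliptic bound \reff{useful1}, with a small-$\eps$ absorption into the left-hand side and then a restoration of $\norm{\lambda D_y^m f_1}_{L^2}$ via \reff{useful1}. The only difference is one of bookkeeping style: the paper writes out the Leibniz expansion of $\lpq D_y^m f_1-\mpq D_y^m f_2$ and the coefficients $a_{p,j},b_{p,j},c_{p,j},d_{p,j}$ explicitly and performs a specific integration by parts in the $I_4$ estimate, whereas you describe those same manipulations at a higher level and defer the combinatorial verification.
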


\begin{proof}
Here we only prove \reff{dy+} since \reff{dx+} can be handled in a similar way. In the sequel $C_p$ will denote different
suitable constants depending  only on $p$ and the constant $C$ in \reff{useful1}. 
Leibniz formula gives
\begin{eqnarray*}
&&\lpq D_y^{m}  f_1-\mpq D_y^{m}  f_2\\
&=&D_y^{m} [\lpq f_1-\mpq f_2] \\&&-\sum_{j=1}^{  2p }{m\choose j }\inner{ D_y^j\inner{\lambda^{2(p+1)}y^{2p}/4} -
  D_y^j\inner{\lambda^{p+1}y^{p}} D_x  }D_y^{m-j} f_1\\&&+\frac{i}{2}\sum_{j=1}^{p-1}{m\choose j}\inner{D_y^j\inner{p\lambda^{p+1}y^{p-1}}}D_y^{m-j}f_2.
\end{eqnarray*}
Furthermore, a direct computation of the terms in the parentheses of the above equality shows that
\begin{eqnarray*}
 && D_y^j\inner{\lambda^{2(p+1)}y^{2p}/4} - D_y^j\inner{\lambda^{p+1}y^{p}} D_x \\
& =&\left\{
\begin{array}{lll}
i\lambda^{p+1} py^{p-1}\inner{D_x-\lambda^{p+1} y^p/2},\quad &j=1,\\
a_{p,j}\lambda^{p+1}  y^{p-j}\inner{D_x-\lambda^{p+1}  y^p/2}+b_{p,j} \lambda^{2(p+1)}  y^{2p-j},\quad &2\leq j\leq
p,\\
c_{p,j}\lambda^{2(p+1)}  y^{2p-j},\quad &p< j\leq 2p,
\end{array}
\right.
\end{eqnarray*}
and
\begin{eqnarray*}
D_y^j\inner{p\lambda^{p+1}y^{p-1}}
=\left\{
\begin{array}{lll}
p(p-1)\lambda^{p+1}y^{p-2},\quad &j=1,\\
d_{p,j}\lambda^{p+1}y^{p-1-j},\quad &2\leq j\leq p-1,
\end{array}
\right.
\end{eqnarray*}
where  $a_{p,j}, b_{p,j}, c_{p,j},d_{p,j}$ are   constants depending only on $p$ and
$j$ such that
\begin{equation}\label{abcd}
  2( \sabs{ a_{p,j}}+\sabs{b_{p,j}}+\sabs{c_{p,j}}+\sabs{d_{p,j}})\leq C_p.
\end{equation}
Then
\begin{eqnarray*}
&&\lpq D_y^{m}  f_1-\mpq D_y^{m}  f_2\\
&=&D_y^{m} [\lpq f_1-\mpq f_2]\\
&&-im p\lambda^{p+1} y^{p-1}\inner{D_x-\lambda^{p+1} y^p/2}D_y^{m-1}f_1 \\
&&-\sum_{j=2}^{ p}{m\choose
  j }a_{p,j} \lambda^{p+1} y^{p-j}\inner{D_x-\lambda^{p+1} y^p/2} D_y^{m-j} f_1\\
&&-\sum_{j=2}^{p}{m\choose
  j }\lambda^{2(p+1)} b_{p,j} y^{2p-j} D_y^{m-j} f_1 \\
&&-\sum_{j=p+1}^{  2p }{m\choose
  j }\lambda^{2(p+1)} c_{p,j} y^{2p-j}  D_y^{m-j} f_1\\
&&+\frac{i}{2}mp(p-1)\lambda^{p+1}y^{p-2}D_y^{m-1}f_2\\
&&+\frac{i}{2}\sum_{j=2}^{p-1}{m\choose j}d_{p,j}\lambda^{p+1}y^{p-1-j}D_y^{m-j}f_2.
\end{eqnarray*}
Taking the $L^2$-inner product with $D_y^mf_1$ on both sides of the above equality and by virtue of \reff{useful1},
one has
\begin{equation}\label{estimate1}
 \begin{aligned}
  &\norm{\lambda D_y^{m} f_1 }^2 +\norm{(D_x-\lambda^{p+1}y^p/2) D_y^{m} f_1 }^2+ \norm{(D_y+\lambda^{q+1}x^q/2)  D_y^{m} f_1 }^2\\
 \leq &~C_p \sum_{1\leq j\leq 8}\sabs{I_j},
 \end{aligned}
\end{equation}
where
\begin{eqnarray*}
 I_1&=&\inner{D_y^{m} [\lpq f_1 -\mpq f_2], ~D_y^{m}  f_1}_{L^2},\\
I_2&=&-i\,m\,p\, \biginner{\lambda^{p+1} y^{p-1}\inner{D_x-\lambda^{p+1} y^p/2} D_y^{m-1} f_1,  ~D_y^{m}  f_1
}_{L^2},\\
I_3&=&-\sum_{j=2}^{ p}{m\choose
  j }a_{p,j} \biginner{\lambda^{p+1}y^{p-j}\inner{D_x-\lambda^{p+1}y^p/2} D_y^{m-j} f_1,  ~D_y^{m}  f_1
}_{L^2},\\
I_4&=&-\sum_{j=2}^{p}{m\choose
  j }b_{p,j} \biginner{\lambda^{2(p+1)} y^{2p-j}  D_y^{m-j} f_1,  ~D_y^{m} f_1
}_{L^2},
\\
I_5&=&-\sum_{j=p+1}^{ 2p }{m\choose
  j }c_{p,j} \biginner{\lambda^{2(p+1)} y^{2p-j}  D_y^{m-j} f_1,  ~D_y^{m}  f_1}_{L^2}\\
I_6&=&\frac{i}{2}mp(p-1)\biginner{\lambda^{p+1}y^{p-2}D_y^{m-1}f_2,~D_y^{m}  f_1}_{L^2}\\
I_7&=&\frac{i}{2}\sum_{j=2}^{p-1}{m\choose j}d_{p,j}\biginner{\lambda^{p+1}y^{p-1-j}D_y^{m-j}f_2,~D_y^{m}  f_1}_{L^2}\\
I_8&=&\biginner{\mpq D_y^m f_2,~D_y^{m}  f_1}_{L^2}.
\end{eqnarray*}
By \reff{abcd}, using the Cauchy-Schwarz inequality one has that for any given
$\eps>0$ there exists a constant $C_{\eps,p}$, depending only on $\eps$ and
$p$, such that
\begin{eqnarray*}
|I_1| &\leq & \norm{D_y^{m} [\lpq f_1 -\mpq f_2]}_{L^2} \norm{D_y^{m} f_1 }_{L^2}\\
&\leq &\eps \norm{ \lambda D_y^{m} f_1 }_{L^2}^2 + \lambda^{-1}C_{\eps,p}\norm{D_y^{m} [\lpq f_1 -\mpq f_2]}_{L^2}^2,
\end{eqnarray*}
and
\begin{eqnarray*}
|I_2|+|I_3| & \leq & ~\eps~  \norm{(D_x-\lambda^{p+1}y^p/2) D_y^{m} f_1
}^2+C_{\eps,p} m^2  \norm{ \lambda^{p+1} y^{p-1}D_y^{m-1}
  f_1}_{L^2}^2\\
&&+C_{\eps,p}\sum_{j=2}^p {m\choose
  j }^2
\norm{ \lambda^{p+1} y^{p-j}D_y^{m-j}  f_1}_{L^2}^2.
\end{eqnarray*}
As for the  term $I_4$ we have
\begin{eqnarray*}
|I_4| &\leq& ~C_p m^2\norm{ \lambda^{p+1}  y^{p-1} D_y^{m-1}  f_1
}_{L^2}^2\\
&&+C_p \sum_{j=2}^{p}\inner{\frac{(m-1)!}{j!(m-j)!}}^2  \norm{\lambda^{p+1} y^{p-j} D_y^{m-j} f_1}_{L^2}^2\\
&&+C_p \sum_{j=3}^{p}\inner{\frac{(m-1)!}{j!(m-j)!}}^2 \norm{\lambda^{p+1} y^{p-j+1} D_y^{m-j+1} f_1}_{L^2}^2,
\end{eqnarray*}
which can be deduced from the following integration by parts
\begin{eqnarray*}
I_4&=&-\sum_{j=2}^{p}{m\choose
  j }b_{p,j} \biginner{\inner{D_y(\lambda^{2(p+1)}y^{2p-j})}  D_y^{m-j} f_1,  ~D_y^{m-1}  f_1}_{L^2}\\
&&-\sum_{j=2}^{p}{m\choose j }b_{p,j} \biginner{\lambda^{2(p+1)}y^{2p-j}  D_y^{m-j+1} f_1,  ~D_y^{m-1}  f_1}_{L^2}\\
&=&-\sum_{j=2}^{p}{m\choose j }b_{p,j} (2p-j)\biginner{\lambda^{2(p+1)} y^{2p-j-1}  D_y^{m-j} f_1,  ~D_y^{m-1} f_1}_{L^2}\\
&&-b_{p,2}\frac{m(m-1)}{2}\norm{\lambda^{p+1}y^{p-1}D_y^{m-1}f_1}_{L^2}^2\\
&&-\sum_{j=3}^{p}{m\choose j }b_{p,j} \biginner{\lambda^{2(p+1)}y^{2p-j}  D_y^{m-j+1} f_1,  ~D_y^{m-1}  f_1}_{L^2}.
\end{eqnarray*}
Similarly we get for $I_5$ that
\begin{eqnarray*}
|I_5| &\leq& ~C_p m^2\norm{ \lambda^{p+1}   D_y^{m-1}  f_1
}_{L^2}^2\\
&&+C_p \sum_{j=p+1}^{2p}\inner{\frac{(m-1)!}{j!(m-j)!}}^2  \norm{\lambda^{p+1} y^{2p-j} D_y^{m-j} f_1}_{L^2}^2\\
&&+C_p \sum_{j=p+1}^{2p}\inner{\frac{(m-1)!}{j!(m-j)!}}^2 \norm{\lambda^{p+1} y^{p-j+1} D_y^{m-j+1} f_1}_{L^2}^2.
\end{eqnarray*}
As regards the term $I_6$, when $p=1$ it vanishes. We may therefore consider $I_6$ when $p\geq 2$ and have
\begin{eqnarray*}
|I_6| &\leq & mp(p-1)\norm{D_y^{m}  f_1 }_{L^2} \norm{\lambda^{p+1}y^{p-2}D_y^{m-1}f_2}_{L^2}\\
&\leq &\eps \norm{ \lambda D_y^{m} f_1 }_{L^2}^2 + m^2C_{\eps,p}\norm{\lambda^{p+1}y^{p-2}D_y^{m-1}f_2}_{L^2}^2\\
&\leq &\eps \norm{ \lambda D_y^{m} f_1 }_{L^2}^2 + m^2C_{\eps,p}\lambda^{2p}\norm{\lambda D_y^{m-1}f_2}_{L^2}^2\\
&&+m^2C_{\eps,p}\norm{\lambda^{p+1} y^{p-1} D_y^{m-1}f_2}_{L^2}^2,
\end{eqnarray*}
the last inequality holding because
 \begin{eqnarray*}
&&m^2\norm{\lambda^{p+1}y^{p-2}D_y^{m-1}f_2}_{L^2}^2\\
&=&m^2\inner{\int_{\sabs y \leq 1} +\int_{\sabs y > 1} }\lambda^{2(p+1)}y^{2(p-2)}\sabs{D_y^{m-1}f_2}^2 dxdy\\
&\leq &m^2\int_{\sabs y >1}\lambda^{2(p+1)}y^{2(p-1)}\sabs{D_y^{m-1}f_2}^2 dxdy \\&&+m^2\int_{\sabs y\leq 1}\lambda^{2(p+1)}\sabs{D_y^{m-1}f_2}^2 dxdy \\
&\leq&m^2\lambda^{2p}\norm{\lambda D_y^{m-1}f_2}_{L^2}^2+m^2\norm{\lambda^{p+1} y^{p-1} D_y^{m-1}f_2}_{L^2}^2.
\end{eqnarray*}
By arguing as for the term $I_1$ we estimate the term $I_7$ as
\begin{eqnarray*}
|I_7|& \leq & C_p \sum_{j=2}^{p-1}{m\choose j }\norm{\lambda^{p+1}y^{p-1-j}D_y^{m-j}f_2}_{L^2}\norm{ \lambda D_y^{m} f_1 }_{L^2}\\
&\leq & \eps \norm{ \lambda D_y^{m} f_1 }_{L^2}^2 + C_{\eps,p}\sum_{j=2}^{p-1}{m\choose j }^2\norm{\lambda^{p+1}y^{p-1-j}D_y^{m-j}f_2}_{L^2}^2.
\end{eqnarray*}
Finally, for the term $I_8$ we have
\begin{eqnarray*}
|I_8|&= &\Bigl|\frac{i}{2}\int \inner{q\lambda^{q+1}x^{q-1}+p\lambda^{p+1}x^{p-1}}D_y^{m}f_2 D_y^{m} f_1 dxdy\Bigr|\\
&\leq&C_{p,q}\int \sabs{\lambda^{q+1}x^{q-1}+\lambda^{p+1}x^{p-1}}\sabs{D_y^{m}f_2} \sabs{D_y^{m} f_1} dxdy\\
&\leq&C_{p,q}\norm{\lambda \sabs{\lambda^{q+1}x^{q-1}+\lambda^{p+1}x^{p-1}}^{\frac{1}{2}}D_y^{m} f_1}_{L^2}\\
&&\times\norm{\frac{1}{\lambda}\sabs{\lambda^{q+1}x^{q-1}+\lambda^{p+1}x^{p-1}}^{\frac{1}{2}}D_y^{m} f_2}_{L^2}\\
&\leq& \eps \lambda^2\norm{\sabs{\lambda^{q+1}x^{q-1}+\lambda^{p+1}x^{p-1}}^{\frac{1}{2}}D_y^{m} f_1}_{L^2}^2\\
&&+C_{p,q,\eps}\frac{1}{\lambda^2}\norm{\sabs{\lambda^{q+1}x^{q-1}+\lambda^{p+1}x^{p-1}}^{\frac{1}{2}}D_y^{m} f_2}_{L^2}^2\\
&\leq&\eps\lambda^2C\norm{Z_{\lambda}D_y^m f_1}_{L^2}^2+C_{p,q,\eps}\frac{1}{\lambda^2}C\norm{Z_{\lambda}D_y^m f_2}_{L^2}^2,
\end{eqnarray*}
where $C_{p,q}$ and $C_{p,q,\eps}$ are constants respectively depending only on $p,q$ and $p,q,\eps$. The last inequality follows from \reff{useful1}.

Combining the estimates of $I_1,I_2,\cdots, I_8$ and \reff{estimate1},
the  result  \reff{dy+} follows immediately if we let $\eps$ above be
sufficiently small (such that $\eps \leq\frac{1}{\lambda^4}$) and $\lambda$ large enough. The proof is thus complete.
\end{proof}

\begin{lem}\label{lemma2}
Let  $p,q\geq 1$ and  let $\lpq$ and $\mpq$ be the operator  respectively given in
\reff{tl} and \reff{mpq}.  There exists a constant $ C_0$,  depending only on $p, q$ and the
  constant $C$ given in \reff{useful1},  such that  for every integer $m\geq
  1$   and   any  given $f_1,f_2\in
H^\infty \bigcap \mathcal H_{Z_\lambda}^\infty$, we have
\begin{eqnarray}\label{dy2+}
 \begin{split}
  &\norm{\lambda  D_y^m f_2}_{L^2}+\norm{Z_\lambda  D_y^m f_2}_{L^2} \\
 &~\leq  C_0  \norm{ D_y^m\inner{ \lpq f_2+\mpq f_1}}_{L^2}\\
 &~+C_0m\norm{ \lambda^{p+1} y^{p-1}D_y^{m-1}   f_2}_{L^2}\\
&~+C_0m\norm{ \lambda^{p+1} y^{p-1}D_y^{m-1}   f_1}_{L^2}+C_0m\lambda^p \norm{\lambda D_y^{m-1}f_1}_{L^2}\\
&~+C_0  \lambda^{p}\frac{m!} {(2p)!(m-2p)!}\norm{\lambda  D_y^{m-2p}   f_2}_{L^2}\\
&~+C_0   \sum_{j=2}^{2p-1}\frac{m!}{j!(m-j)!}   \norm{\lambda^{p+1} y^{\delta_j}D_y^{m-j}  f_2 }_{L^2}\\
&~+ C_0   \sum_{j=3}^{p}\frac{(m-1)!}{j!(m-j)!}   \norm{\lambda^{p+1}y^{\rho_j }D_y^{m-j+1}  f_2}_{L^2}\\
&~+C_0   \sum_{j=2}^{p-1}\frac{(m-1)!}{j!(m-j)!}   \norm{\lambda^{p+1}y^{\eta_j }D_y^{m-j}  f_1}_{L^2}\\
&~+C_0\frac{1}{\lambda}\norm{Z_\lambda  D_y^m f_1}_{L^2},
 \end{split}
 \end{eqnarray}
where  $\delta_j, \rho_j,\eta_j\in\set{1,2,\cdots, p-1}$ .
Similarly we have
\begin{eqnarray}\label{dx2+}
 \begin{split}
  &\norm{\lambda  D_x^m f_2}_{L^2} +\norm{Z_\lambda  D_x^m f_2}_{L^2} \\
 &~\leq  C_0  \norm{ D_x^m\inner{ \lpq f_2+\mpq f_1}}_{L^2}\\
 &~+C_0m\norm{ \lambda^{q+1} x^{q-1}D_x^{m-1}   f_2}_{L^2}\\
&~+C_0m\norm{ \lambda^{q+1} x^{q-1}D_x^{m-1}   f_1}_{L^2}+C_0m\lambda^q \norm{\lambda D_x^{m-1}f_1}_{L^2}\\
&~+C_0  \lambda^{q}\frac{m!} {(2p)!(m-2p)!}\norm{\lambda  D_x^{m-2q}   f_2}_{L^2}\\
&~+C_0   \sum_{j=2}^{2q-1}\frac{m!}{j!(m-j)!}   \norm{\lambda^{q+1} x^{\delta_j}D_x^{m-j}  f_2 }_{L^2}\\
&~+ C_0   \sum_{j=3}^{q}\frac{(m-1)!}{j!(m-j)!}   \norm{\lambda^{q+1}x^{\rho_j }D_x^{m-j+1}  f_2 }_{L^2}\\
&~+C_0   \sum_{j=2}^{q-1}\frac{(m-1)!}{j!(m-j)!}   \norm{\lambda^{q+1}x^{\eta_j }D_x^{m-j}  f_1 }_{L^2}\\
&~+C_0\frac{1}{\lambda}\norm{Z_\lambda  D_x^m f_1}_{L^2},
\end{split}
 \end{eqnarray}
where  $\delta_j, \rho_j,\eta_j\in\set{1,2,\cdots, p-1}$.
\end{lem}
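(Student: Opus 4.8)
The plan is to establish \reff{dy2+} and \reff{dx2+} by repeating, essentially verbatim, the argument already used for Lemma \ref{lemma1}, once one notices the symmetry between the two lines of \reff{eqnset}. The second equation $\lpq f_2 + \mpq f_1 = g_2$ is obtained from the first by interchanging $f_1\leftrightarrow f_2$ and replacing $\mpq$ by $-\mpq$; since every estimate in the proof of Lemma \ref{lemma1} involving $\mpq$ passes through $\sabs{\mpq}$ (Cauchy--Schwarz), the sign is irrelevant. It therefore suffices to prove \reff{dy2+}, and \reff{dx2+} follows by the identical computation with the pair $(y,p)$ replaced by $(x,q)$. In particular \reff{dy2+} is simply the mirror image of \reff{dy+}, so no essentially new difficulty arises.

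First I would apply Leibniz's formula to $\lpq D_y^m f_2 + \mpq D_y^m f_1$, writing it as $D_y^m\comi{\lpq f_2 + \mpq f_1}$ minus the commutator terms ${m\choose j}\comi{D_y^j(\lambda^{2(p+1)}y^{2p}/4) - D_y^j(\lambda^{p+1}y^p)D_x}D_y^{m-j}f_2$ plus the lower-order terms $\frac i2{m\choose j}D_y^j(p\lambda^{p+1}y^{p-1})D_y^{m-j}f_1$; this is precisely the identity from the proof of Lemma \ref{lemma1} with $f_1$ and $f_2$ swapped, and the explicit commutator coefficients $a_{p,j},b_{p,j},c_{p,j},d_{p,j}$ satisfying \reff{abcd} are unchanged, so I would quote them. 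Taking the $L^2$ inner product of the resulting identity with $D_y^m f_2$ then yields an analogue of \reff{estimate1} whose right-hand side is $C_p\sum_{1\le j\le 8}\sabs{I_j}$, where $I_1$ carries the source $D_y^m\comi{\lpq f_2 + \mpq f_1}$; $I_2,I_3$ are the top-order commutator contributions acting on $f_2$, absorbed (after Cauchy--Schwarz with a small $\eps$) into $\norm{(D_x-\lambda^{p+1}y^p/2)D_y^m f_2}^2$ on the left; $I_4,I_5$ are treated by integrating by parts in $y$ to trade one $D_y$ for a power of $y$; $I_6,I_7$ are the lower-order contributions of $\mpq f_1$; and $I_8=\comi{\mpq D_y^m f_1,~D_y^m f_2}_{L^2}$ is the cross term.

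The only two points needing mild care are the usual ones. For $I_6$ (which vanishes when $p=1$, and otherwise carries the weight $\lambda^{p+1}y^{p-2}$ not directly controlled by \reff{useful1}) I would split the integral over $\set{\sabs y\le 1}$ and $\set{\sabs y>1}$, bounding $\lambda^{p+1}y^{p-2}$ by $\lambda^p\cdot\lambda$ on the first region and by $\lambda^{p+1}y^{p-1}$ on the second. For $I_8$, using $\sabs{\mpq}\le C_{p,q}\sabs{\lambda^{q+1}x^{q-1}+\lambda^{p+1}y^{p-1}}$ and distributing a factor $\lambda$ to the $f_2$-slot and $\lambda^{-1}$ to the $f_1$-slot, Cauchy--Schwarz together with \reff{useful1} gives $\eps\lambda^2C\norm{Z_\lambda D_y^m f_2}^2 + C_{p,q,\eps}\lambda^{-2}C\norm{Z_\lambda D_y^m f_1}^2$; the first summand is absorbed on the left and the second becomes the trailing term $C_0\lambda^{-1}\norm{Z_\lambda D_y^m f_1}_{L^2}$ of \reff{dy2+}. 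The one thing to watch — and it is really the whole content of the ``difficulty'' here — is that this last term is deliberately \emph{not} absorbed at this stage: \reff{dy2+} is not self-contained in the variable $f_1$, and the term in $\norm{Z_\lambda D_y^m f_1}$ can only be disposed of later, in the proof of Proposition \ref{pr1}, by summing \reff{dy2+} with its counterpart \reff{dy+} and choosing $\lambda$ large, which is where the coupling between $f_1$ and $f_2$ is finally resolved. Picking $\eps\le\lambda^{-4}$ and $\lambda$ large enough then closes \reff{dy2+}, and \reff{dx2+} is obtained identically with $(x,q)$ in place of $(y,p)$.
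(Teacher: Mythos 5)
Your argument is correct and is precisely what the paper intends: the paper dispatches Lemma~\ref{lemma2} with the single line ``The proof is similar to that of Lemma~\ref{lemma1} and will be omitted,'' and your proposal spells out that similarity in the right way, via the symmetry $f_1\leftrightarrow f_2$, $\mpq\to -\mpq$ (harmless since $\mpq$ only enters through $\sabs{\mpq}$ after Cauchy--Schwarz). Your closing remark about the residual term $C_0\lambda^{-1}\norm{Z_\lambda D_y^m f_1}_{L^2}$ being left for absorption when \reff{dy+} and \reff{dy2+} are summed in the proof of Proposition~\ref{pr1} is also exactly the intended bookkeeping.
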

\begin{proof}
The proof is similar to that of Lemma \ref{lemma1} and will be omitted.
\end{proof}


We next recall the following three lemmas, which have been proven by W.-X.~Li and A.~Parmeggiani in  \cite{WXL}.
\begin{lem}\label{lamma3}
Let $q=1, p\geq 1.$   For any given $\eps>0,$  there exists a constant $C_{\eps}$, which depends only on
$\eps$,  $p$ and the constant $C$ in \reff{useful1},  such that for
every $m\geq 1$ and any $v\in H^\infty$ we have
\begin{eqnarray}\label{inter}
\begin{split}
& m \norm{\lambda^{p+1}  y^{p-1}D_y^{m-1}  v}_{L^2}  \\
\leq&~\eps  \norm{ \lambda D_x^{m}  v} +\eps   \norm{\lambda D_y^{m}  v}+C_{\eps} m \lambda^{p}
\norm{\lambda D_y^{m-1}  v} \\
& +C_{\eps} \lambda m^{(p+1)/2}\norm{Z_\lambda D_y^{m-1}  v}.
\end{split}
\end{eqnarray}
\end{lem}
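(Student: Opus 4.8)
The plan is to deduce the estimate from the subelliptic inequality \reff{useful1} by exploiting the bracket of $Z_{1,\lambda}$ and $Z_{2,\lambda}$. Since $q=1$, a one–line computation (using $[D_x,x]=-i$ and $[y^{p},D_y]=ipy^{p-1}$) gives
\[
[Z_{1,\lambda},Z_{2,\lambda}]=-\frac{i}{2}\bigl(\lambda^{2}+p\,\lambda^{p+1}y^{p-1}\bigr),\qquad\text{i.e.}\qquad \lambda^{p+1}y^{p-1}=\frac{2i}{p}\,[Z_{1,\lambda},Z_{2,\lambda}]-\frac{\lambda^{2}}{p}.
\]
Moreover $Z_{1,\lambda}$ commutes with every function of $y$ alone, whereas bracketing with $Z_{2,\lambda}$ lowers the $y$-degree by one (for instance $[Z_{2,\lambda},[Z_{1,\lambda},Z_{2,\lambda}]]=-\frac{p(p-1)}{2}\lambda^{p+1}y^{p-2}$), and after $p-1$ brackets one reaches the constant $\lambda^{p+1}$; this nilpotent hierarchy is the source of the exponent $(p+1)/2$. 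When $p=1$ the weight is the constant $\lambda^{2}$ and $m\norm{\lambda^{2}D_y^{m-1}v}_{L^{2}}=m\lambda\norm{\lambda D_y^{m-1}v}_{L^{2}}\le m\lambda^{p}\norm{\lambda D_y^{m-1}v}_{L^{2}}$, already of the required form; so I would assume $p\ge 2$.

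Put $w=D_y^{m-1}v$. The identity gives $m\norm{\lambda^{p+1}y^{p-1}w}_{L^{2}}\le \frac{2m}{p}\norm{[Z_{1,\lambda},Z_{2,\lambda}]w}_{L^{2}}+\frac{m}{p}\lambda\norm{\lambda w}_{L^{2}}$, the last summand being of the form $C_\eps m\lambda^{p}\norm{\lambda D_y^{m-1}v}_{L^{2}}$ since $\lambda\ge1$, $p\ge1$. For the first term I would write $\norm{[Z_{1,\lambda},Z_{2,\lambda}]w}_{L^{2}}^{2}=\bigl([Z_{1,\lambda},Z_{2,\lambda}]w,[Z_{1,\lambda},Z_{2,\lambda}]w\bigr)$, expand \emph{only one} factor as $Z_{1,\lambda}Z_{2,\lambda}-Z_{2,\lambda}Z_{1,\lambda}$, and move the self-adjoint operators $Z_{j,\lambda}$ onto the other factor; using $[Z_{1,\lambda},[Z_{1,\lambda},Z_{2,\lambda}]]=0$ and the formula above for $[Z_{2,\lambda},[Z_{1,\lambda},Z_{2,\lambda}]]$, this yields
\[
\norm{[Z_{1,\lambda},Z_{2,\lambda}]w}_{L^{2}}^{2}=\bigl(Z_{2,\lambda}w,\,[Z_{1,\lambda},Z_{2,\lambda}]Z_{1,\lambda}w\bigr)-\bigl(Z_{1,\lambda}w,\,[Z_{1,\lambda},Z_{2,\lambda}]Z_{2,\lambda}w\bigr)+\frac{p(p-1)}{2}\bigl(Z_{1,\lambda}w,\,\lambda^{p+1}y^{p-2}w\bigr).
\]
Expanding only one factor matters: $Z_{1,\lambda}w=D_xD_y^{m-1}v-\frac{\lambda^{p+1}}{2}y^{p}D_y^{m-1}v$ and $Z_{2,\lambda}w=D_y^{m}v+\frac{\lambda^{2}}{2}xD_y^{m-1}v$ have order $m$, so the three pairings involve only order-$m$ objects against weights, integration by parts keeping all intermediate terms at order $\le m$ throughout; and the mixed order-$m$ piece $D_xD_y^{m-1}v$ is dominated by $\norm{D_x^{m}v}_{L^{2}}+\norm{D_y^{m}v}_{L^{2}}$ because $|\xi|^{a}|\eta|^{b}\le|\xi|^{m}+|\eta|^{m}$ for $a+b=m$.

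The three pairings are then estimated by Cauchy--Schwarz, integration by parts, and repeated use of \reff{useful1} (whose $q=1$ form carries $\lambda^{q+1}x^{q-1}=\lambda^{2}$, a constant, which absorbs the harmless $x$-weight of $Z_{2,\lambda}w$): the weighted pieces surviving in the pairings, carrying powers $y^{p}$, $y^{p-2}$, etc., are reduced by re-inserting the bracket identity and re-applying \reff{useful1}, each reduction strictly lowering the $y$-degree so that the recursion stops after at most $p-1$ steps and each degree-lowering step costs one more factor comparable to $m$. A Cauchy--Schwarz with free parameter chosen equal to $m^{p+1}$ then turns the surviving product-type bounds into, schematically,
\[
m^{2}\lambda^{2}\,\norm{Z_\lambda D_y^{m-1}v}_{L^{2}}\,\norm{\lambda D_y^{m-1}v}_{L^{2}}\le \frac{\eps}{2}\,m^{p+1}\lambda^{2}\norm{Z_\lambda D_y^{m-1}v}_{L^{2}}^{2}+\frac{1}{2\eps}\,m^{3-p}\lambda^{2}\norm{\lambda D_y^{m-1}v}_{L^{2}}^{2},
\]
whose first term equals $\frac{\eps}{2}\bigl(m^{(p+1)/2}\lambda\norm{Z_\lambda D_y^{m-1}v}_{L^{2}}\bigr)^{2}$ and whose second, by $m\ge1$, $\lambda\ge1$, is $\le\frac{1}{2\eps}\bigl(m\lambda^{p}\norm{\lambda D_y^{m-1}v}_{L^{2}}\bigr)^{2}$ — this is exactly where $(p+1)/2$ comes from. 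Collecting everything (for $\lambda\ge\lambda_0$), bounding $m^{2}\norm{\lambda^{p+1}y^{p-1}w}_{L^{2}}^{2}$ by $\eps^{2}\bigl(\norm{\lambda D_x^{m}v}_{L^{2}}+\norm{\lambda D_y^{m}v}_{L^{2}}\bigr)^{2}+C_\eps^{2}\bigl(m^{(p+1)/2}\lambda\norm{Z_\lambda D_y^{m-1}v}_{L^{2}}+m\lambda^{p}\norm{\lambda D_y^{m-1}v}_{L^{2}}\bigr)^{2}$, and taking square roots, yields \reff{inter}; all the smallness in front of the top-order terms is produced by the Cauchy--Schwarz parameters, not by $\lambda$ being large.

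The analytic steps are routine (Cauchy--Schwarz, integration by parts, and \reff{useful1}); the main obstacle is the combinatorial bookkeeping of the bracket hierarchy: one must verify that the $m$-factors collected along the $O(p)$ reduction steps and in the Leibniz expansions of the polynomial weights against $D_y^{m-1}$ accumulate to \emph{exactly} $m^{(p+1)/2}$ and no worse, that every reduction step genuinely lowers the $y$-degree so the recursion terminates, and that all of the resulting loss can be deposited on $\norm{\lambda D_x^{m}v}_{L^{2}}$ and $\norm{\lambda D_y^{m}v}_{L^{2}}$ with a coefficient $\le\eps$.
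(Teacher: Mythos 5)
The paper does not prove this lemma; it cites it from \cite{WXL}. Your bracket-hierarchy strategy, however, does not close. The identity $\lambda^{p+1}y^{p-1}=\frac{2i}{p}[Z_{1,\lambda},Z_{2,\lambda}]-\frac{\lambda^2}{p}$ is correct, but the subsequent expansion of $\norm{[Z_{1,\lambda},Z_{2,\lambda}]w}_{L^2}^2$ is circular as written: its two leading pairings, $(Z_{2,\lambda}w,[Z_{1,\lambda},Z_{2,\lambda}]Z_{1,\lambda}w)$ and $-(Z_{1,\lambda}w,[Z_{1,\lambda},Z_{2,\lambda}]Z_{2,\lambda}w)$, recombine (since $[Z_{1,\lambda},[Z_{1,\lambda},Z_{2,\lambda}]]=0$) to reproduce $\norm{[Z_{1,\lambda},Z_{2,\lambda}]w}^2$ exactly, so no gain is extracted. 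If instead one applies Cauchy--Schwarz to each pairing, one is left with $\norm{\,|[Z_{1,\lambda},Z_{2,\lambda}]|^{1/2}Z_{j,\lambda}w}_{L^2}$ which, after \reff{useful1}, produces \emph{second-order} quantities $\norm{Z_\lambda Z_{j,\lambda}w}_{L^2}$ that do not appear on the right-hand side of \reff{inter} and cannot be absorbed. Your claim that ``re-inserting the bracket identity'' lowers the $y$-degree step by step is also not substantiated: the weight $[Z_{1,\lambda},Z_{2,\lambda}]\,\lambda^{p+1}y^{p}\sim(\lambda^2+\lambda^{p+1}y^{p-1})\lambda^{p+1}y^{p}$ arising from the $y^p$ part of $Z_{1,\lambda}w$ has \emph{higher} $y$-degree. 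Finally, the schematic bound $m^2\lambda^2\norm{Z_\lambda w}\norm{\lambda w}\le\cdots$, which is where $(p+1)/2$ appears, is never actually derived from the expansion, and the choice of Cauchy--Schwarz parameter $m^{p+1}$ is unmotivated by anything in the computation.

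The mechanism that actually produces the exponent $(p+1)/2$ is much simpler and does not involve the nilpotent hierarchy at all. Set $w=D_y^{m-1}v$ and split the domain at $|y|=R$ with $R=Cm/(\eps\lambda)$. On $\{|y|\le R\}$, write $\lambda^{p+1}|y|^{p-1}=\bigl(\lambda^{(p+1)/2}|y|^{(p-1)/2}\bigr)^2\le \lambda^{(p+1)/2}R^{(p-1)/2}\cdot\lambda^{(p+1)/2}|y|^{(p-1)/2}$ and apply \reff{useful1} \emph{once} (not twice, which is precisely what trips up your argument) to bound $\norm{\lambda^{(p+1)/2}|y|^{(p-1)/2}w}$ by $\norm{Z_\lambda w}$; then $m\lambda^{(p+1)/2}R^{(p-1)/2}=C_\eps\lambda\,m^{(p+1)/2}$, which is the target coefficient. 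On $\{|y|>R\}$, write $|y|^{p-1}\le|y|^p/R$, absorb $\lambda^{p+1}y^p$ into $Z_{1,\lambda}$ and $D_x$, note $\norm{D_xD_y^{m-1}v}\le\norm{D_x^mv}+\norm{D_y^mv}$, and observe $m/R=\eps\lambda/C$, giving the $\eps$-small coefficients on $\norm{\lambda D_x^mv}$, $\norm{\lambda D_y^mv}$. The case $p=1$ is trivial, as you noted. Your argument therefore misses the essential idea (a single $m$-dependent spatial split in $y$) and replaces it with a bracket bootstrap that leaves uncontrolled second-order terms.
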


\begin{lem}\label{lemma4}
Let $p,q\geq 2.$  Then for any given $\eps>0,$  there exists a constant $C_{\eps}$, which depends only
on $\eps, p$ and the constant $C$ in \reff{useful2}, such that for
every $m\geq 1$ and all $v\in H^\infty$ we have
\begin{equation}\label{inter1}
\begin{aligned}
 m \norm{\lambda^{p+1}  y^{p-1}D_y^{m-1}  v}_{L^2}  \leq&   \eps   \inner{ \norm{D_y^{m}  v}_{L^2}+ \norm{ D_x^{m}  v}_{L^2}}\\
 &+C_{\eps}\lambda^{p}m^{(2p+2)/3} \norm{
   Z_\lambda D_y^{m-1}  v}_{L^2}.
   \end{aligned}
\end{equation}
In particular, if both $p$ and $q$ are odd then
\begin{equation}\label{inter2}
\begin{aligned}
 m \norm{\lambda^{p+1}  y^{p-1}D_y^{m-1}  v}_{L^2}  \leq&   \eps   \inner{ \norm{D_y^{m}  v}_{L^2}+ \norm{ D_x^{m}  v}_{L^2}}\\
 &+C_{\eps}\lambda^{p}m^{(p+1)/2} \norm{
   Z_\lambda D_y^{m-1}  v}_{L^2}.
   \end{aligned}
\end{equation}
\end{lem}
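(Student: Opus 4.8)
The plan is to establish \reff{inter1}--\reff{inter2} as weighted interpolation inequalities: the ``heavy'' weight $\lambda^{p+1}y^{p-1}$ acting on $D_y^{m-1}v$ must be re-expressed, up to a controlled power of $m$, through the ``light'' weights $\big(\lambda^{p+1}|y|^{p-1}\big)^{1/2}$, $\lambda^{(p+1)/3}|y|^{(p-2)/3}$ and $\lambda$ which \reff{useful1}--\reff{useful2} already bound by $\norm{Z_\lambda\,\cdot}_{L^2}$, at the cost of an $\eps$-small amount of the top-order norms $\norm{D_x^mv}_{L^2}$, $\norm{D_y^mv}_{L^2}$. Conceptually this is possible because $\lambda^{p+1}y^{p-1}$ (together with the harmless $x^{q-1}$-term) is, up to a constant, the first commutator $[Z_{1,\lambda},Z_{2,\lambda}]$, so the degenerate directions $Z_{1,\lambda},Z_{2,\lambda}$ and their iterated commutators — which is exactly what \reff{useful1}--\reff{useful2} encode — control it. The technical inputs I would use are: the operator identity $\frac{\lambda^{p+1}}{2}y^p=D_x-Z_{1,\lambda}$ (and $\frac{\lambda^{q+1}}{2}x^q=Z_{2,\lambda}-D_y$), which lets one trade a factor $y^p$ for the ``good'' operator $D_x-Z_{1,\lambda}$ and which may be inserted freely into $y$-weighted inner products since $D_x$ and $Z_{1,\lambda}$ carry no $\partial_y$; the weighted AM--GM (Fourier) inequality $\norm{D_xD_y^{m-1}v}_{L^2}\le\frac{\delta}{m}\norm{D_x^mv}_{L^2}+C_\delta\norm{D_y^mv}_{L^2}$, which converts the mixed derivative so produced into the top-order norms of the statement; and the commutation formula $[D_y^{m-1},y^r]=\sum_{1\le\ell\le r}c_{r,\ell}{m-1\choose\ell}\,y^{r-\ell}D_y^{m-1-\ell}$, the exact source of the powers of $m$. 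Since the weight, the surviving derivatives, and all commutators live in $y$ while the $x^q$-part of $Z_{2,\lambda}$ commutes with them, the argument is effectively one-dimensional in $y$, which is why $C_\eps$ does not depend on $q$.

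Setting $w:=D_y^{m-1}v$, I would run a downward iteration on the exponent $j$ of the $y$-weight, starting from $j=p-1$. Each step squares, $\norm{\lambda^{p+1}y^jw}_{L^2}^2=\langle\lambda^{2(p+1)}y^{2j}w,w\rangle$, extracts one factor $\lambda^{p+1}y^p=2(D_x-Z_{1,\lambda})$ (when $2j\ge p$), moves the remaining $y$-weight to the other slot and applies Cauchy--Schwarz, or — for the residual low exponents — integrates by parts in $y$ and commutes the leftover $y$-monomial through $D_y^{m-1}$ by the formula above; either way $\norm{\lambda^{p+1}y^jw}_{L^2}$ is bounded by $\norm{\lambda^{p+1}y^{j'}w}_{L^2}$ for a strictly smaller $j'$, times a sum of a $Z_\lambda$-term, a mixed-derivative term (handled once and for all by the weighted AM--GM, eventually contributing only $\eps\big(\norm{D_x^mv}_{L^2}+\norm{D_y^mv}_{L^2}\big)$), and accumulated powers of $\lambda$ and of $m$. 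The iteration terminates at two weights: at $j=0$, where \reff{useful1} closes the estimate via $\norm{\lambda^{p+1}w}_{L^2}=\lambda^p\norm{\lambda w}_{L^2}\le C\lambda^p\norm{Z_\lambda w}_{L^2}$ — the origin of the factor $\lambda^p$ — and at the weight $\lambda^{(p+1)/3}|y|^{(p-2)/3}$, where \reff{useful2}, the length-$3$ commutator estimate, closes it by $\norm{Z_\lambda w}_{L^2}$.

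The power of $m$ is pinned down by which subelliptic estimate terminates the iteration, and the exponent $(2p+2)/3$ is forced by matching $\lambda$-exponents: building the target weight's $\lambda$-exponent $p+1$ out of that of \reff{useful2}, which is only $(p+1)/3$ (reflecting the length-$3$, ``$\frac23$-gain'' commutator), makes the iteration pass through $\sim(p+1)$ units of build-up, each paid for by one commutation through $D_y^{m-1}$ worth a factor $m^{2/3}$; the product is $m^{(p+1)\cdot 2/3}=m^{(2p+2)/3}$, which is \reff{inter1} — the extra global factor $m$ on its left being folded into the same count. When $p$ and $q$ are both odd, $y^{p-1}=|y|^{p-1}$ with $(p-1)/2\in\mathbb Z$, so $\big(\lambda^{p+1}y^{p-1}\big)^{1/2}=\lambda^{(p+1)/2}|y|^{(p-1)/2}$ — and its $x$-counterpart $\lambda^{(q+1)/2}|x|^{(q-1)/2}$, which is where the oddness of $q$ is used — are monomials, so the iteration may instead be run directly against \reff{useful1}, the length-$2$, ``$\frac12$-gain'' estimate; each unit of build-up then costs only $m^{1/2}$ and the total loss improves to $m^{(p+1)/2}$, which is \reff{inter2}.

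The main obstacle is not any single computation but the \emph{uniform} bookkeeping. One must organise the iteration so that, first, each top-order term $\norm{D_x^mv}_{L^2}$, $\norm{D_y^mv}_{L^2}$ is produced with a coefficient that can be made $\le\eps$ and never carries a factor of $m$ growing with $m$ — which forces spending exactly one weighted AM--GM splitting and routing every remaining loss into the $Z_\lambda$-term, exploiting the $\frac{\delta}{m}$ in front of $\norm{D_x^mv}_{L^2}$ — and, second, all the $Z_\lambda$-contributions produced at the various stages must recombine into a single $C_\eps\lambda^pm^{(2p+2)/3}\norm{Z_\lambda D_y^{m-1}v}_{L^2}$ with exactly the claimed exponent, which demands precise tracking of the ${m-1\choose\ell}$-factors from each commutation through $D_y^{m-1}$ and of the powers of $\lambda$ accrued at each substitution $\lambda^{p+1}y^p=2(D_x-Z_{1,\lambda})$ and each appeal to \reff{useful1}--\reff{useful2}. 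As in the proof of Lemma~\ref{lemma1}, the argument concludes by taking $\eps$ small and $\lambda\ge\lambda_0$ large enough to absorb the borderline terms to the left-hand side.
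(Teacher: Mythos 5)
Your conceptual picture is right --- the lemma is an interpolation between \reff{useful1}--\reff{useful2} and the identity $\tfrac{\lambda^{p+1}}{2}y^p=D_x-Z_{1,\lambda}$, and you correctly explain why the better exponent $(p+1)/2$ needs \emph{both} $p$ and $q$ odd (only then is $q\lambda^{q+1}x^{q-1}+p\lambda^{p+1}y^{p-1}$ a sum of nonnegative monomials, so that \reff{useful1} pointwise dominates $\lambda^{(p+1)/2}|y|^{(p-1)/2}$). But the mechanism you propose has a genuine gap. A commutation of $D_y^{m-1}$ through a power of $y$ produces a \emph{full} factor of $m$, not $m^{2/3}$, together with lower-order derivatives $D_y^{m-1-\ell}v$; none of these appear on the right of \reff{inter1}--\reff{inter2}, so your ``residual low exponents'' step generates terms the inequality does not accommodate, and the count ``$\sim(p+1)$ units of build-up, each worth $m^{2/3}$'' is not how $(2p+2)/3$ arises (the ratio of $\lambda$-exponents between the target and \reff{useful2} is $3$, not $p+1$).

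In fact no iteration and no commutation are needed: the exponent comes from a single radial cut-off at $R\asymp m/\eps$. With $w=D_y^{m-1}v$, on $\{|y|\le R\}$ write $\lambda^{p+1}|y|^{p-1}\le\lambda^{(2p+2)/3}R^{(2p-1)/3}\cdot\lambda^{(p+1)/3}|y|^{(p-2)/3}$ and use \reff{useful2}; on $\{|y|>R\}$ write $\lambda^{p+1}|y|^{p-1}\le R^{-1}\lambda^{p+1}|y|^p$ and use $\lambda^{p+1}y^pw=2(D_x-Z_{1,\lambda})w$. This yields
\[
m\norm{\lambda^{p+1}y^{p-1}w}_{L^2}\le Cm\lambda^{(2p+2)/3}R^{(2p-1)/3}\norm{Z_\lambda w}_{L^2}+\tfrac{2m}{R}\bigl(\norm{D_xw}_{L^2}+\norm{Z_{1,\lambda}w}_{L^2}\bigr).
\]
Taking $R=2m/\eps$ makes the last group $\eps\bigl(\norm{D_xw}_{L^2}+\norm{Z_{1,\lambda}w}_{L^2}\bigr)$; since $\norm{D_xD_y^{m-1}v}_{L^2}\le\norm{D_x^mv}_{L^2}+\norm{D_y^mv}_{L^2}$ by Plancherel and Young with weights $1/m$, $(m-1)/m$ (so no $C_\delta$ blow-up, contrary to what your AM--GM step would leave on $\norm{D_y^mv}_{L^2}$), and the $Z_\lambda$-coefficient becomes $C_\eps\lambda^{(2p+2)/3}m^{1+(2p-1)/3}\le C_\eps\lambda^p m^{(2p+2)/3}$ for $p\ge2$, this is \reff{inter1}. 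Replacing \reff{useful2} by \reff{useful1} on $\{|y|\le R\}$ (legitimate only when $p,q$ are both odd) gives $m\cdot R^{(p-1)/2}=O(m^{(p+1)/2})$ and \reff{inter2}. The exponents are thus $1+(2p-1)/3$ and $1+(p-1)/2$ with $R\asymp m/\eps$, not accumulated commutator costs, and the $\eps$-smallness of the top-order terms is set by the cut-off radius rather than by a weighted AM--GM.
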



\begin{lem}\label{lemma5}
For any given $k,\ell$ with $\ell \leq p-1$ and $2\leq k\leq m$, we have
\begin{eqnarray}\label{inter3}
\begin{split}
  \norm{\lambda^{p+1} y^{\ell} D_y^{m-k}
  v}_{L^2}  
\leq &   C_p\inner{ \norm{
   Z_\lambda D_y^{m-k}  v}_{L^2}+\norm{
    D_x^{m-k+1}  v}_{L^2}}\\
   &\quad+ C_p\inner{\norm{  D_y^{m-k+1}  v}_{L^2}+\lambda^{p}\norm{\lambda
    D_y^{m-k}  v}_{L^2}}.
\end{split}
\end{eqnarray}
\end{lem}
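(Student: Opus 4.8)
The plan is to establish \reff{inter3} by a simple dichotomy on the size of $y$, trading the subcritical weight $\lambda^{p+1}y^\ell$ for the critical weight $\lambda^{p+1}y^p$ that is built into the operator $Z_{1,\lambda}=D_x-\frac{\lambda^{p+1}}{2}y^p$. Throughout, I would work with $v\in\Sal(\R^2)$ and extend to the stated class at the end by density, the inequality being trivial whenever its right-hand side is infinite.

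First I would split the $L^2$-norm on the left of \reff{inter3} over the two regions $\set{\sabs y\leq 1}$ and $\set{\sabs y>1}$, using $\norm{w}_{L^2}\leq \norm{w}_{L^2(\set{\sabs y\leq 1})}+\norm{w}_{L^2(\set{\sabs y> 1})}$. On $\set{\sabs y\leq 1}$, since $\ell\geq 0$ we have $\sabs y^\ell\leq 1$, hence $\norm{\lambda^{p+1}y^\ell D_y^{m-k}v}_{L^2(\set{\sabs y\leq1})}\leq \lambda^{p+1}\norm{D_y^{m-k}v}_{L^2}=\lambda^{p}\norm{\lambda D_y^{m-k}v}_{L^2}$, which is exactly the last term on the right of \reff{inter3}; note that it is precisely the restriction to the bounded set that lets one keep the power $\lambda^p$ rather than $\lambda^{p+1}$. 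On $\set{\sabs y>1}$, since $0\leq \ell\leq p-1<p$ we have $\sabs y^\ell\leq \sabs y^p$ there, so $\norm{\lambda^{p+1}y^\ell D_y^{m-k}v}_{L^2(\set{\sabs y>1})}\leq \norm{\lambda^{p+1}y^pD_y^{m-k}v}_{L^2}$.

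It then remains to bound $\norm{\lambda^{p+1}y^pD_y^{m-k}v}_{L^2}$. The only algebraic input is that, as operators, $\lambda^{p+1}y^p=2(D_x-Z_{1,\lambda})$; applying both sides to $D_y^{m-k}v$ gives the identity $\lambda^{p+1}y^pD_y^{m-k}v=2D_xD_y^{m-k}v-2Z_{1,\lambda}D_y^{m-k}v$ (no commutator appears), whence by the triangle inequality $\norm{\lambda^{p+1}y^pD_y^{m-k}v}_{L^2}\leq 2\norm{D_xD_y^{m-k}v}_{L^2}+2\norm{Z_\lambda D_y^{m-k}v}_{L^2}$. Finally, for the mixed derivative I would use Plancherel together with the elementary inequality $\sabs{\xi}\,\sabs{\eta}^{m-k}\leq \max(\sabs{\xi},\sabs{\eta})^{m-k+1}\leq \sabs{\xi}^{m-k+1}+\sabs{\eta}^{m-k+1}$, which gives $\norm{D_xD_y^{m-k}v}_{L^2}\leq \norm{D_x^{m-k+1}v}_{L^2}+\norm{D_y^{m-k+1}v}_{L^2}$. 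Putting the three bounds together and summing the two regions yields \reff{inter3} with an absolute constant (one may take $C_p=2$).

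This lemma carries no real obstacle; it is essentially a bookkeeping estimate. The only point requiring care is the precise power of $\lambda$ produced on the bounded region — obtaining $\lambda^p\norm{\lambda D_y^{m-k}v}_{L^2}$ and not $\lambda^{p+1}\norm{\lambda D_y^{m-k}v}_{L^2}$ is what makes \reff{inter3} strong enough to be fed back into the a priori estimate of Proposition \ref{pr1} — together with checking, each time this lemma is invoked, that the weight exponent at hand indeed satisfies $\ell\leq p-1$, so that the comparison $\sabs y^\ell\leq\sabs y^p$ on $\set{\sabs y>1}$ is legitimate.
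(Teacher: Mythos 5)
Your proof is correct and complete. One small remark before the substantive one: the paper under review does not actually prove Lemma~\ref{lemma5} --- it recalls it, together with Lemmas~\ref{lamma3} and \ref{lemma4}, as having been established in \cite{WXL} --- so there is no in-paper argument to compare against. Nevertheless, your route is almost certainly the intended one: (i) split on $\{\sabs y\leq 1\}$ and $\{\sabs y>1\}$, which on the bounded piece correctly produces the weight $\lambda^{p}\norm{\lambda D_y^{m-k}v}_{L^2}$ (not $\lambda^{p+1}$) and on the unbounded piece permits the trivial comparison $\sabs y^\ell\leq\sabs y^p$ valid precisely because $0\leq\ell\leq p-1$; (ii) the operator identity $\lambda^{p+1}y^p=2(D_x-Z_{1,\lambda})$, which requires no commutator bookkeeping since both sides act on the same function $D_y^{m-k}v$; and (iii) the Plancherel interpolation $\norm{D_xD_y^{m-k}v}_{L^2}\leq\norm{D_x^{m-k+1}v}_{L^2}+\norm{D_y^{m-k+1}v}_{L^2}$. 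For step (iii), observe that $\sabs\xi^2\sabs\eta^{2(m-k)}\leq\sabs\xi^{2(m-k+1)}+\sabs\eta^{2(m-k+1)}$ by Young's inequality directly on the squares, which avoids the intermediate $\sqrt 2$ you might otherwise worry about and confirms that $C_p$ can indeed be taken to be an absolute constant (e.g.\ $C_p=2$). It is also worth noting explicitly, as you do implicitly, that the conditions $2\leq k\leq m$ play no active role in the estimate itself (only $m-k\geq 0$ is used); they simply record the range in which the lemma is invoked inside the proof of Proposition~\ref{pr1}. In short: no gap, the approach is the natural one, and the constant claim is justified.
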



\begin{proof}[Proof of Proposition \ref{pr1}]
  The first conclusion  for $p,q\geq 2$ in Proposition \ref{pr1}
  follows  from \reff{dy+}, \reff{dx+}, \reff{dy2+}, \reff{dx2+}, \reff{inter1}, \reff{inter2} and\reff{inter3} when we take $\lambda$ large enough. 
  The second conclusion is just a consequence of \reff{dy+}, \reff{dx+}, \reff{dy2+}, \reff{dx2+} and \reff{inter} for $\lambda$ sufficiently large. 
  This ends the proof. 
\end{proof}

\subsection{Global Gevrey hypoellipticity}
By Proposition \ref{pr1} we can follow the arguments in \cite[Proposition 4.1]{WXL} to conclude that $f\in H^{\infty}$ whenever 
$\mathbb{L}_{p,q}f\in H^{\infty}.$ 
For the $H^{\infty}$-solution $\left[\begin{array}{c}f_1\\f_2\end{array}\right]$ of the system \reff{lmeqnset}, we get the following two properties, 
which yield the global $\boldsymbol{\wedge}^\ell G^{\sigma,\tau}$-hypoellipticity. 

\begin{prop}\label{prpkey}
Let  $p,q\geq 2$  and let  $\mathbb{L}_{p,q;\lambda} $  be the operator  given in
\reff{source1}. Denote $$\sigma=\left\{
\begin{array}{lll}
\max\set{(p+1)/2,(q+1)/2},&\quad {\it if~both~}p~{\it and~} q~{\it
  are~odd};\\[3pt]
\max\set{(2p+2)/3,(2q+2)/3},&\quad {\it otherwise}.
\end{array}
\right.$$
Suppose $f_1,f_2 \in H^\infty \bigcap \mathcal H_{Z_\lambda}^\infty$ be such that
\begin{eqnarray*}
  \left\{
\begin{array}{lll}
L_{p,q}f_1-M_{p,q}f_2=g_1\\
L_{p,q}f_2+M_{p,q}f_1=g_2,
\end{array}
\right.
\end{eqnarray*}
where $g_1,g_2 \in C^\infty$ satisfy the condition
\begin{equation*}
     \forall k \in\mathbb Z_+,\quad \norm{D_x^k
       g_i}_{L^2}+\norm{D_y^k
       g_i}_{L^2}\leq
     M_1^{k+1}\inner{k!}^{\sigma},\quad i=1,2,
\end{equation*}
for some constant $M_1$. Then there exists a constant $C_0$,  depending only on $p,
q,  M_1$ and the constant $C$ given in \reff{useful1},  such that for
given $m\geq 1$,  if $\forall ~k \leq m-1,$
\begin{eqnarray*}
  &&\sum_{i=1}^2\norm{\lambda  D_x^k f_i}_{L^2} +\sum_{i=1}^2\norm{\lambda  D_y^k f_i}_{L^2}
+\sum_{i=1}^2\norm{Z_\lambda  D_x^k f_i}_{L^2} +\sum_{i=1}^2\norm{Z_\lambda  D_y^k f_i}_{L^2}\\
 &&\leq \lambda^{(p+q) k} M^{k+1}\inner{k!}^\sigma
\end{eqnarray*}
 for some constant $M\geq M_1$,  then
\begin{eqnarray*}
&& \sum_{i=1}^2\norm{\lambda  D_x^m f_i}_{L^2} +\sum_{i=1}^2\norm{\lambda  D_y^m f_i}_{L^2}
+\sum_{i=1}^2\norm{Z_\lambda  D_x^m f_i}_{L^2}   +\sum_{i=1}^2\norm{Z_\lambda  D_y^m f_i}_{L^2}\\
 &&\leq ~ C_0 \lambda^{(p+q) m}M^{m}\inner{m!}^\sigma.
 \end{eqnarray*}
As a consequence, if we choose $M\geq M_1+C_0+\sum_{i=1}^2\norm{f_i}_{L^2}$, then  we have by induction that, $  \forall ~k \geq 0,$
\begin{eqnarray*}
 && \sum_{i=1}^2\norm{\lambda  D_x^k f_i}_{L^2} +\sum_{i=1}^2\norm{\lambda  D_y^k f_i}_{L^2}
+\sum_{i=1}^2\norm{Z_\lambda  D_x^k f_i}_{L^2}  + \sum_{i=1}^2\norm{Z_\lambda  D_y^k f_i}_{L^2}\\
  &&\leq \lambda^{(p+q) k} M^{k+1}\inner{k!}^\sigma,
\end{eqnarray*}
and thus $\mathbb{L}_{p,q;\lambda}\colon\boldsymbol{\wedge}^1\mathscr{S}'\longrightarrow\boldsymbol{\wedge}^1\mathscr{S}'$ 
is globally $\boldsymbol{\wedge}^1 G^{\sigma,\sigma}$-hypoelliptic in $\R^2.$
\end{prop}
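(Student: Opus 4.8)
The plan is to run an induction on $m$, using the a priori estimate of Proposition~\ref{pr1}(i) as the engine and the Gevrey bound on the source $g_1,g_2$ together with the inductive hypothesis to control every term on the right-hand side. First I would apply \reff{dy} with the given $m$; the two lines involving $D_x^m(L_{p,q}f_1\mp M_{p,q}f_2)$ and $D_y^m(\cdots)$ are exactly $D_x^m g_i$, $D_y^m g_i$ after the rescaling, so by hypothesis they are bounded by $\lambda^{(p+q)m}M_1^{m+1}(m!)^\sigma$ (absorbing the $\lambda$-powers, which only help since $\lambda\geq 1$). Every remaining term on the right of \reff{dy} involves $Z_\lambda D_x^{k}f_i$, $Z_\lambda D_y^{k}f_i$, $\lambda D_x^{k}f_i$ or $\lambda D_y^{k}f_i$ with $k\leq m-1$, multiplied by a combinatorial factor of the form $m^{A_{p,q}}$, $m!/((2q)!(m-2q)!)$, $m!/(m-j)!$ or $(m-1)!/(m-j)!$, and by a fixed power $\lambda^{p+q}$. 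So after invoking the inductive hypothesis for all $k\leq m-1$, the whole right-hand side is bounded by $C_0\lambda^{(p+q)m}M^{m}(m!)^\sigma$ times a sum of ratios of the shape
\[
\frac{m^{a}}{(m!)^\sigma}\,\big((m-1)!\big)^\sigma,\qquad
\frac{m!}{(m-j)!\,(m!)^\sigma}\,\big((m-j)!\big)^\sigma ,\qquad
\frac{m!}{(2q)!(m-2q)!\,(m!)^\sigma}\,\big((m-2q)!\big)^\sigma,
\]
each of which I must show is bounded by an absolute constant (so that they can be summed into $C_0$). This is the routine-but-essential combinatorial step: for the first type one needs $a\leq\sigma$, which is precisely why $A_{p,q},B_{p,q}$ were chosen to equal $\sigma$ in the relevant cases (e.g. $(q+1)/2\leq\sigma$, $(2q+2)/3\leq\sigma$); for the factorial types one uses $m!/((m-j)!\,(j!))\leq\binom{m}{j}$-type bounds and $(m!)^{\sigma-1}\geq \big((m-j)!\big)^{\sigma-1}j!^{\,?}$, i.e. the standard Gevrey inequality $(m-j)!\,j!\leq m!$ raised to the power $\sigma\geq 1$, together with the fact that $j$ ranges over a bounded set (up to $2q-1$, $2p-1$, $2p$) so there are only finitely many such terms and each carries at least one "spare" factorial.

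Having bounded the right-hand side of \reff{dy} by $C_0\lambda^{(p+q)m}M^{m}(m!)^\sigma$, I get the asserted conclusion for step $m$. The passage to the "as a consequence" statement is then a clean bootstrap: choosing $M$ at least as large as $M_1+C_0+\sum_i\|f_i\|_{L^2}$ makes the base case $k=0$ (and $k=1$, since the sums $\sum_{j=2}^{2q-1}$, $\sum_{j=3}^q$ etc.\ are empty or harmless for small $m$ by the convention $\sum_{j=k}^n=0$ for $n<k$) hold, and the inductive step just proved upgrades the bound $\leq\lambda^{(p+q)k}M^{k+1}(k!)^\sigma$ for all $k\leq m-1$ to the same bound with $k=m$, because $C_0\lambda^{(p+q)m}M^m(m!)^\sigma\leq\lambda^{(p+q)m}M^{m+1}(m!)^\sigma$ once $M\geq C_0$. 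Thus the estimate holds for every $k\geq 0$.

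Finally, from $\sum_i\|\lambda D_x^k f_i\|+\sum_i\|\lambda D_y^k f_i\|\leq\lambda^{(p+q)k}M^{k+1}(k!)^\sigma$ for all $k$ I conclude global Gevrey regularity: evaluating at a fixed $\lambda$ (say $\lambda=\lambda_0$) and recalling $f_{j\lambda}(x,y)=f_j(\lambda x,\lambda y)$, a rescaling back shows $\|D_x^k f_j\|_{L^2}+\|D_y^k f_j\|_{L^2}\leq \tilde M^{k+1}(k!)^\sigma$, and combining mixed $x$- and $y$-derivatives via interpolation (or by iterating the one-variable estimates, using that $D_x$ and $D_y$ commute) yields $\|D_x^\alpha D_y^\beta f_j\|_{L^2}\leq \tilde M^{1+\alpha+\beta}(\alpha!)^\sigma(\beta!)^\sigma$, i.e.\ $f_1,f_2\in G^{\sigma,\sigma}$, hence $f=f_1\,dx+f_2\,dy\in\boldsymbol{\wedge}^1 G^{\sigma,\sigma}$. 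The main obstacle is really the first step: organizing the many inhomogeneous terms in \reff{dy} and checking that each combinatorial prefactor is compatible with the chosen Gevrey exponent $\sigma$ — in particular that the "worst" terms $m^{A_{p,q}}\|Z_\lambda D_x^{m-1}f_i\|$ and $m^{B_{p,q}}\|Z_\lambda D_y^{m-1}f_i\|$ lose exactly one derivative while gaining a factor $m^\sigma\sim (m!)^\sigma/((m-1)!)^\sigma$, which is the precise reason $\sigma$ cannot be taken smaller. The coupling between $f_1$ and $f_2$ through $M_{p,q}$ causes no extra trouble here because Proposition~\ref{pr1} already incorporates it, delivering a closed estimate for the pair $(f_1,f_2)$ simultaneously.
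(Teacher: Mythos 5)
Your proposal is correct and follows exactly the approach the paper takes: the paper's own proof of Proposition~\ref{prpkey} consists of the single line ``Proposition~\ref{pr1} yields immediately the result by induction,'' and you have simply unpacked that induction. You correctly identify that the only substantive work is matching each combinatorial prefactor in~\reff{dy} against the Gevrey weight $(m!)^\sigma$ (the constraint $A_{p,q},B_{p,q}\le\sigma$ being precisely what fixes $\sigma$), that the extra $\lambda^{p+q}$ on each inhomogeneous term is absorbed because it pairs with a derivative drop of at least one, and that mixed derivatives are recovered from pure $D_x^k,\,D_y^k$ bounds by the standard Plancherel/Young interpolation $\norm{D_x^\alpha D_y^\beta f}_{L^2}^2\le\norm{D_x^{\alpha+\beta}f}_{L^2}^2+\norm{D_y^{\alpha+\beta}f}_{L^2}^2$ together with $((\alpha+\beta)!)^\sigma\le 2^{\sigma(\alpha+\beta)}(\alpha!)^\sigma(\beta!)^\sigma$.
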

\begin{proof}
Proposition \ref{pr1} yields immediately the result by induction.
\end{proof}

Similarly we have the following proposition.

\begin{prop}\label{prpkey+}
Let  $q=1$ and $p\geq 1$,  and let  $\mathbb{L}_{p,q;\lambda} $  be the operator  given in
\reff{source1}.
 Suppose $f_1,f_2 \in H^\infty \bigcap \mathcal H_{Z_\lambda}^\infty$ such that
\begin{eqnarray*}
  \left\{
\begin{array}{lll}
L_{p,q}f_1-M_{p,q}f_2=g_1\\
L_{p,q}f_2+M_{p,q}f_1=g_2,
\end{array}
\right.
\end{eqnarray*}
with $g_1,g_2 \in C^\infty$ satisfying the condition
\begin{equation*}
     \forall k,s \in\mathbb Z_+,\quad \norm{D_x^k
       D_y^s
       g_i}_{L^2}\leq
     M_1^{k+s+1}k!\inner{s !}^{(p+1)/2},\quad i=1,2,
\end{equation*}
for some constant $M_1$. Then there exists a constant $C_0$,  depending only on $p,
q,  M_1$ and the constant $C$ given in \reff{useful1},  such that for
given $m\geq 2p+2q$,  if
\begin{eqnarray*}
  \forall ~k \leq m-1,\quad \sum_{i=1}^2\norm{\lambda  D_x^k f_i}_{L^2} +\sum_{i=1}^2  \norm{Z_\lambda  D_x^k f_i}_{L^2}\leq \lambda^{k}M^{k+1}k!, \\ 
\sum_{i=1}^2\norm{\lambda  D_y^k f_i}_{L^2} +\sum_{i=1}^2  \norm{Z_\lambda  D_y^k f_i}_{L^2}\leq\lambda^{p k} M^{k+1}\inner{k!}^{(p+1)/2},
\end{eqnarray*}
 for some constant $M\geq M_1$, then
\begin{equation*}
 \sum_{i=1}^2\norm{\lambda  D_x^m f_i}_{L^2} +\sum_{i=1}^2  \norm{Z_\lambda  D_x^m f_i}_{L^2}\leq C_0 \lambda^{m}M^{m}m!
 \end{equation*}
and
\begin{equation*}
\sum_{i=1}^2\norm{\lambda  D_y^m f_i}_{L^2} +\sum_{i=1}^2  \norm{Z_\lambda  D_y^m f_i}_{L^2}\leq C_0 \lambda^{pm}M^{m}\inner{m!}^{(p+1)/2}.
 \end{equation*}
As a consequence, $\mathbb{L}_{p,q;\lambda}\colon\boldsymbol{\wedge}^1\mathscr{S}'\longrightarrow\boldsymbol{\wedge}^1\mathscr{S}'$ 
is globally $\boldsymbol{\wedge}^1 G^{1, (p+1)/2}$-hypoelliptic in $\R^2.$
\end{prop}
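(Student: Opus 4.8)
The plan is to prove Proposition~\ref{prpkey+} by induction on $m$, exactly paralleling the proof of Proposition~\ref{prpkey} but now powered by part (ii) of Proposition~\ref{pr1}, i.e. the a priori estimates \reff{dx1} and \reff{dx2}, which are adapted to the degenerate case $q=1$. Since $\mathbb L_{p,q}$ on $1$-forms is the coupled system \reff{lmeqnset}, the induction must be carried on the pair $(f_1,f_2)$ simultaneously, and since the $x$- and $y$-directions now carry different Gevrey weights I would track two separate quantities: $S^x_k:=\sum_{i=1}^2\big(\norm{\lambda D_x^k f_i}_{L^2}+\norm{Z_\lambda D_x^k f_i}_{L^2}\big)$, to be bounded by $\lambda^{k}M^{k+1}k!$ (the analytic, $x$-weight, since $q=1$), and $S^y_k:=\sum_{i=1}^2\big(\norm{\lambda D_y^k f_i}_{L^2}+\norm{Z_\lambda D_y^k f_i}_{L^2}\big)$, to be bounded by $\lambda^{pk}M^{k+1}(k!)^{(p+1)/2}$. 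The range $0\le k<2p+2q$ is absorbed into the choice of $M$ as in Proposition~\ref{prpkey}; the hypothesis $m\ge 2p+2q$ then guarantees that every shifted order $m-j$, $m-2p$, $m-2q$ occurring on the right of \reff{dx1}--\reff{dx2} is nonnegative, so the inductive hypothesis applies to it. Throughout, the source terms $\norm{D_x^m(\lpq f_1\mp\mpq f_2)}_{L^2}$ are, through the scaling relating \reff{lmeqnset} to \reff{eqnset}, norms of $D_x^m g_{i\lambda}$, hence controlled by the Gevrey hypothesis $\norm{D_x^kg_i}_{L^2}\le M_1^{k+1}k!$ together with the attendant powers of $\lambda$.

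For the inductive step in the $x$-direction I would invoke \reff{dx1} at order $m$. The two source terms are bounded as just described; the remaining terms $\lambda^{q}m\,\norm{\lambda D_x^{m-1}f_i}_{L^2}$ and $\lambda^{q}m^2\,\norm{\lambda D_x^{m-2}f_i}_{L^2}$ are estimated by the inductive hypothesis for $S^x_{m-1}$ and $S^x_{m-2}$. The key point is that $q=1$, so $\lambda^{q}\cdot\lambda^{m-1}=\lambda^{m}$ and $\lambda^{q}\cdot\lambda^{m-2}\le\lambda^{m}$, while the combinatorial prefactors behave as $m\cdot(m-1)!=m!$ and $m^2\cdot(m-2)!\le m!$, so everything lands inside $\lambda^{m}M^{m}m!$, giving $S^x_m\le C_0\lambda^{m}M^{m}m!$.

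For the $y$-direction I would invoke \reff{dx2} at order $m$. Besides the source terms and the term $C_0\sum_i\norm{\lambda D_x^m f_i}_{L^2}$ (already controlled by the $x$-step and, since $p\ge1$, dominated by $\lambda^{pm}M^m(m!)^{(p+1)/2}$), the contributions to check are $\lambda^{p}m^{(p+1)/2}\norm{Z_\lambda D_y^{m-1}f_i}_{L^2}$, the term with prefactor $\frac{m!}{(2p)!(m-2p)!}$, the finite sums $\sum_{j=2}^{2p-1}\frac{m!}{(m-j)!}(\cdots)$ over $Z_\lambda D_y^{m-j}f_i$, $\lambda D_y^{m-j}f_i$, $\lambda D_y^{m-j+1}f_i$, and the cross terms $\sum_{j=2}^{2p-1}\frac{m!}{(m-j)!}\norm{\lambda D_x^{m-j+1}f_i}_{L^2}$. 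For each I would check two elementary facts: first, that the factorial prefactor times the $y$-weight at the shifted order stays below the $y$-weight at order $m$ --- which reduces to $m^{(p+1)/2}((m-1)!)^{(p+1)/2}=(m!)^{(p+1)/2}$ and to $\frac{m!}{(m-j)!}\big((m-j)!\big)^{(p+1)/2}=m!\big((m-j)!\big)^{(p-1)/2}\le(m!)^{(p+1)/2}$ (using $(p-1)/2\ge0$), while for the cross terms one instead uses the $x$-weight at order $m-j+1\le m-1$ together with $\frac{m!}{(m-j)!}(m-j+1)!=m!(m-j+1)\le m\cdot m!\le(m!)^{(p+1)/2}$, valid for $p\ge2$ in the range $m\ge2p+2q$ at hand (when $p=1$ these sums are empty); and second, that the accompanying power of $\lambda$ does not exceed $\lambda^{pm}$, which amounts to inequalities such as $p(m-j+1)\le pm$ and $p+m-j+1\le pm$ for $p\ge1$, $j\ge2$, $m\ge1$. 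Summing up yields $S^y_m\le C_0\lambda^{pm}M^{m}(m!)^{(p+1)/2}$.

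Finally I would close the induction as in Proposition~\ref{prpkey}: choosing $M\ge M_1+C_0+\sum_i\norm{f_i}_{L^2}$ makes $C_0M^m\le M^{m+1}$, so the two refined bounds $S^x_m\le C_0\lambda^mM^mm!$ and $S^y_m\le C_0\lambda^{pm}M^m(m!)^{(p+1)/2}$ upgrade to the clean inductive hypotheses at order $m$; hence they hold for every $k\ge0$, which is precisely the global $\boldsymbol{\wedge}^1 G^{1,(p+1)/2}$-hypoellipticity of $\mathbb L_{p,q;\lambda}$. I expect the only real work to be the bookkeeping described in the previous paragraph --- matching the products of binomial/factorial coefficients and powers of $\lambda$ against the two anisotropic weights while respecting the $f_1$--$f_2$ coupling (so that, as above, the $y$-estimate has to consume the $x$-estimate) --- rather than any new idea, since Proposition~\ref{pr1}(ii) has already isolated every term that must be controlled; this is the exact $q=1$ counterpart of the computation behind Proposition~\ref{prpkey} and of the arguments of \cite{WXL}.
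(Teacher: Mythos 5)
Your proposal is correct and takes essentially the same approach as the paper: the paper deliberately omits an explicit proof of this proposition, saying only ``Similarly we have...'' after the one-line proof of Proposition~\ref{prpkey} (``Proposition~\ref{pr1} yields immediately the result by induction''), and what you have written is precisely the intended induction, now powered by part (ii) of Proposition~\ref{pr1} (estimates \reff{dx1}--\reff{dx2}), with the two anisotropic weights $\lambda^k M^{k+1}k!$ and $\lambda^{pk}M^{k+1}(k!)^{(p+1)/2}$ tracked separately, the $x$-step done first so that the term $C_0\sum_i\norm{\lambda D_x^m f_i}_{L^2}$ in \reff{dx2} is available for the $y$-step, and the base cases $k<2p+2q$ absorbed into the choice of $M$ as in Proposition~\ref{prpkey}. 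Your bookkeeping of factorial prefactors and powers of $\lambda$ (including the remark that the $\sum_{j=2}^{2p-1}$ sums are empty when $p=1$, and that $m\ge 2p+2q$ gives the needed slack in the borderline case $p=2$) is consistent with what the a priori estimates produce.
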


\section{Proof of the main result Theorem \ref{thG}}
The main result can be deduced from the following proposition.
\begin{prop}\label{prpequ}
Let $0\leq\ell\leq 2n$.
Let  $\mathbb{L}_{p,q;\lambda}\colon\boldsymbol{\wedge}^\ell\mathscr{S}'\longrightarrow\boldsymbol{\wedge}^\ell\mathscr{S}'$ 
and $\mathbb{L}_{p,q}\colon\boldsymbol{\wedge}^\ell\mathscr{S}'\longrightarrow\boldsymbol{\wedge}^\ell\mathscr{S}'$ 
be  the twisted Laplacians defined above.   
Then  $\mathbb{L}_{p,q}$  is globally $\boldsymbol{\wedge}^\ell G^\sigma$-hypoelliptic (resp.  $\boldsymbol{\wedge}^\ell H^{m}$-hypoelliptic)  in $\R^{2n}$  if   $\mathbb{L}_{p,q;\lambda}$ is globally
$\boldsymbol{\wedge}^\ell G^\sigma$-hypoelliptic (resp.  $\boldsymbol{\wedge}^\ell H^{m}$-hypoelliptic) in $\R^{2n}$.
\end{prop}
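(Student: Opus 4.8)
The plan is to transfer the hypoellipticity of $\mathbb{L}_{p,q;\lambda}$ back to $\mathbb{L}_{p,q}$ by a dilation conjugation, using the large parameter $\lambda$ purely as a bookkeeping device. For $\lambda>0$ let $W_\lambda$ be the dilation $(W_\lambda u)(x,y)=u(\lambda x,\lambda y)$, which we let act coefficientwise on $\ell$-forms, writing $W_\lambda\otimes\mathrm{Id}$ for the resulting map on $\boldsymbol{\wedge}^\ell\mathscr{S}'(\R^{2n})$. The first observation is that, for each fixed $\lambda$, $W_\lambda\otimes\mathrm{Id}$ is a linear bijection of $\boldsymbol{\wedge}^\ell\mathscr{S}'$ with inverse $W_{1/\lambda}\otimes\mathrm{Id}$, and it restricts to a bijection of each of $\boldsymbol{\wedge}^\ell L^2$, $\boldsymbol{\wedge}^\ell H^m$ and $\boldsymbol{\wedge}^\ell G^{\sigma,\tau}$ onto itself. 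This is elementary: on coefficients $\partial_x^\alpha\partial_y^\beta(W_\lambda u)=\lambda^{|\alpha|+|\beta|}W_\lambda(\partial_x^\alpha\partial_y^\beta u)$ and $\|W_\lambda v\|_{L^2}=\lambda^{-n}\|v\|_{L^2}$, so the defining Gevrey (resp. Sobolev) bounds are preserved after enlarging the relevant constant by a factor depending only on the fixed $\lambda$.

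Next I would record the intertwining identity. A direct computation gives $Z_{j,\lambda}W_\lambda=\lambda\,W_\lambda\bigl(D_{x_j}-y_j^{p_j}/2\bigr)$ and $\bigl(D_{y_j}+\tfrac{\lambda^{q_j+1}}{2}x_j^{q_j}\bigr)W_\lambda=\lambda\,W_\lambda\bigl(D_{y_j}+x_j^{q_j}/2\bigr)$, whence $L_{p,q;\lambda}W_\lambda=\lambda^2 W_\lambda L_{p,q}$; likewise $M^{(j)}_{p,q;\lambda}W_\lambda=\lambda^2 W_\lambda M^{(j)}_{p,q}$. Inserting these into the representation of $\mathbb{L}_{p,q}\bigl|_{\boldsymbol{\wedge}^\ell}$ obtained in Section \ref{sec2} (the contraction/wedge factors $-dx_j\wedge\boldsymbol{i}_{\partial/\partial y_j}+dy_j\wedge\boldsymbol{i}_{\partial/\partial x_j}$ act only on the form part, hence commute with $W_\lambda\otimes\mathrm{Id}$), and accounting for the harmless overall factor $\lambda$ built into the normalization \reff{source1}, one obtains on $\boldsymbol{\wedge}^\ell\mathscr{S}'$ an identity of the form
\begin{equation*}
\mathbb{L}_{p,q;\lambda}\,(W_\lambda\otimes\mathrm{Id})=\lambda^{3}\,(W_\lambda\otimes\mathrm{Id})\,\mathbb{L}_{p,q}.
\end{equation*}
(At $\lambda=1$ this is consistent with the fact that \reff{source1} reduces to \reff{eqL1-form}, i.e. $\mathbb{L}_{p,q;1}=\mathbb{L}_{p,q}$.)

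From here the conclusion is immediate. Assume $\mathbb{L}_{p,q;\lambda}$ is globally $\boldsymbol{\wedge}^\ell G^{\sigma,\tau}$-hypoelliptic for some $\lambda>0$ (by the identity above this holds for one $\lambda$ iff it holds for all of them, so in particular the hypoellipticity established in Section \ref{sec3} only for $\lambda\geq\lambda_0$ already suffices); fix such a $\lambda$. Let $f\in\boldsymbol{\wedge}^\ell L^2(\R^{2n})$ with $\mathbb{L}_{p,q}f=g\in\boldsymbol{\wedge}^\ell G^{\sigma,\tau}(\R^{2n})$, and set $\tilde f=(W_\lambda\otimes\mathrm{Id})f\in\boldsymbol{\wedge}^\ell L^2$. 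Then $\mathbb{L}_{p,q;\lambda}\tilde f=\lambda^{3}(W_\lambda\otimes\mathrm{Id})g\in\boldsymbol{\wedge}^\ell G^{\sigma,\tau}$, since $W_\lambda$ preserves that space; hypoellipticity of $\mathbb{L}_{p,q;\lambda}$ gives $\tilde f\in\boldsymbol{\wedge}^\ell G^{\sigma,\tau}$, and therefore $f=(W_{1/\lambda}\otimes\mathrm{Id})\tilde f\in\boldsymbol{\wedge}^\ell G^{\sigma,\tau}$. The $\boldsymbol{\wedge}^\ell H^m$-statement follows verbatim, replacing $G^{\sigma,\tau}$ by $H^m$, since $W_\lambda$ is a bounded bijection of $\boldsymbol{\wedge}^\ell H^m$ as well.

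There is no genuine analytic difficulty in this proposition: the entire substance of the argument lies in the a priori estimates of Proposition \ref{pr1} and their iteration in Propositions \ref{prpkey}--\ref{prpkey+}, which produce the hypoellipticity of the parametric operator. The only point requiring (minor) care is the first one above, namely that dilation by a fixed factor maps the anisotropic class $G^{\sigma,\tau}$ into itself with only a change of the underlying constant — which is transparent from the definition — together with the freedom to choose the dilation parameter, which is what lets us pass from $\mathbb{L}_{p,q;\lambda}$ with $\lambda$ large to $\mathbb{L}_{p,q}$ itself.
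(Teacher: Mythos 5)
Your proof is correct and is essentially the same as the paper's: the paper also introduces the rescaled functions $f_{i\lambda}(x,y)=f_i(\lambda x,\lambda y)$ and $g_{i\lambda}=\lambda^2 g_i(\lambda x,\lambda y)$ (which is exactly $W_\lambda$ and $\lambda^2 W_\lambda$ in your notation), observes they solve the parametric system \reff{eqnset}, records the corresponding $L^2$-norm scaling identities, and then concludes that hypoellipticity of $\mathbb{L}_{p,q;\lambda}$ implies that of $\mathbb{L}_{p,q}$. You have simply packaged the change of variables as the intertwining identity $\mathbb{L}_{p,q;\lambda}(W_\lambda\otimes\mathrm{Id})=\lambda^3(W_\lambda\otimes\mathrm{Id})\,\mathbb{L}_{p,q}$ and spelled out that $W_\lambda$ preserves the relevant function spaces, which is exactly the ``direct check'' the paper leaves to the reader.
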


\begin{proof}
We give a proof in the case $\ell=1,$ $n=1$ just for the sake of notational simplicity.
Let  $f_1,f_2 \in L^2 (\R^2)$ be  a solution of system \reff{lmeqnset} for $g_1,g_2 \in C^\infty(\R^2)$.  Then, with $g_1,g_2 \in G^{\sigma}$, resp. $g_1,g_2 \in H^m$, and
$f_{1\lambda},f_{2\lambda},g_{1\lambda},g_{2\lambda}$ the rescaled versions with parameters satisfying system \reff{eqnset},
$\forall \alpha\in\mathbb{Z}_+^2$ we get
\begin{eqnarray*}
  &&\norm{D^\alpha f_{i\lambda}}_{L^2}=\lambda^{-1+\sabs\alpha} \norm{D^\alpha
  f_i}_{L^2},\quad  \norm{D^\alpha g_{i\lambda}}_{L^2}=\lambda^{1 +\sabs\alpha} \norm{D^\alpha
  g_i}_{L^2},\\
&&\norm{Z_{j,\lambda}D^\alpha f_{i\lambda}}_{L^2}=\lambda^{\sabs\alpha} \norm{Z_{j,1}D^\alpha
  f_i}_{L^2},
  \end{eqnarray*}
  and
  \begin{eqnarray*}
  \norm{\mpq D^\alpha f_{i\lambda}}_{L^2}=\lambda^{1+\sabs \alpha}\norm{M_{p,q} D^{\alpha} f_i                                                                                                                    }_{L^2},
\end{eqnarray*}
with $i=1,2$ and $j=1,2$.
A direct check shows that the global
$\boldsymbol{\wedge}^\ell G^\sigma$-hypoellipticity (resp. $\boldsymbol{\wedge}^\ell H^{m}$-hypoellipticity)
of $\mathbb{L}_{p,q }$ is deduced from that of $\mathbb{L}_{p,q,\lambda }.$ This ends the proof.
\end{proof}

\end{document}